\documentclass[a4paper,11pt]{amsart} 
\usepackage{amsmath,amsxtra,amssymb,latexsym, amscd,amsthm}
\usepackage[mathscr]{eucal}
\usepackage{mathrsfs}
\usepackage{bbm}
\usepackage{enumerate}
\usepackage{pict2e}
\usepackage{tikz}
\usepackage{graphicx}
\usepackage[a4paper]{geometry}
\usepackage{color}
\geometry{left=3cm,right=3cm,top=2.5cm}
\usepackage{hyperref}
\numberwithin{equation}{section}
\theoremstyle{plain}
\newtheorem{thm}{Theorem}[section]
\newtheorem{prp}[thm]{Proposition}

\newtheorem{lem}[thm]{Lemma}
\newtheorem*{euc*}{Euclidean division}
\newtheorem*{fek*}{Fekete's Lemma}
\newtheorem*{kin*}{Kingman's Subadditive Ergodic Theorem}
\newtheorem*{fur*}{Furstenberg-Kesten Theorem}
\theoremstyle{definition}

\newtheorem{rem}[thm]{Remark}

\newtheorem*{rem*}{Remark}

%
\newcommand{\dd}{\mathrm{d}}

\renewcommand{\Im}{\operatorname{Im}}
\renewcommand{\Re}{\operatorname{Re}}

\makeatletter
\newcommand*{\ov}[1]{%
  $\m@th\overline{\mbox{#1}}$%
}
\newcommand*{\ovA}[1]{%
  $\m@th\overline{\mbox{#1}\raisebox{3mm}{}}$%
}
\newcommand*{\ovB}[1]{%
  $\m@th\overline{\mbox{#1\rule{0pt}{3mm}}}$%
}
\newcommand*{\ovC}[1]{%
  $\m@th\overline{\mbox{#1\strut}}$%
}
\newcommand*{\ovD}[1]{%
  $\m@th\overline{\mbox{#1\vphantom{\"A}}}$%
}
\newcommand*{\ovE}[1]{%
  $\m@th\overline{\raisebox{0pt}[1.2\height]{#1}}$%
}
\newcommand*{\ovF}[1]{%
  $\m@th\overline{\raisebox{0pt}[\dimexpr\height+0.3mm\relax]{#1}}$%
}
\newcommand*{\ovG}[1]{%
  $\m@th\overline{\raisebox{0pt}[\dimexpr\height+1mm\relax]{#1\vphantom{A}}}$%
}
\makeatother

\newcommand{\Z}{\mathbb{Z}}

\newcommand{\R}{\mathbb{R}}
\newcommand{\C}{\mathbb{C}}



\DeclareMathOperator{\tr}{tr}

\DeclareSymbolFont{extraup}{U}{zavm}{m}{n}
\DeclareMathSymbol{\varheart}{\mathalpha}{extraup}{86}
\DeclareMathSymbol{\vardiamond}{\mathalpha}{extraup}{87}

\title{Poisson kernel expansions for Schr\"odinger operators on trees}
\author{Nalini \textsc{Anantharaman} and Mostafa \textsc{Sabri}}
\address{Universit\'e de Strasbourg, CNRS, IRMA UMR 7501, F-67000 Strasbourg, France.}
\email{anantharaman@math.unistra.fr}
\address{Universit\'e de Strasbourg, CNRS, IRMA UMR 7501, F-67000 Strasbourg, France.}
\address{Department of Mathematics, Faculty of Science, Cairo University, Cairo 12613, Egypt.}
\email{sabri@math.unistra.fr}
\subjclass[2010]{Primary 81Q10, 31C20. Secondary 39A12, 05C05}
\keywords{Generalized eigenfunctions, Poisson kernel, Schr\"odinger operator, trees}
\usepackage{calc}
\usepackage{yhmath}
\usepackage{graphicx}

\makeatletter
\newlength{\temp@wc@width}
\newlength{\temp@wc@height}
\newcommand{\widecheck}[1]{%
  \setlength{\temp@wc@width}{\widthof{$#1$}}%
  \setlength{\temp@wc@height}{\heightof{$#1$}}%
  #1\hspace{-\temp@wc@width}%
  \raisebox{\temp@wc@height+2pt}[\heightof{$\widehat{#1}$}]%
     {\rotatebox[origin=c]{180}{\vbox to 0pt{\hbox{$\widehat{\hphantom{#1}}$}}}}%
}
\makeatother
\newcommand{\nwc}{\newcommand}
\nwc{\nwt}{\newtheorem}
\nwt{coro}{Corollary}
\nwt{ex}{Example}
\nwt{prop}{Proposition}
\nwt{defin}{Definition}


\nwc{\mf}{\mathbf} 
\nwc{\blds}{\boldsymbol} 
\nwc{\ml}{\mathcal} 


\nwc{\lam}{\lambda}
\nwc{\del}{\delta}
\nwc{\Del}{\Delta}
\nwc{\Lam}{\Lambda}
\nwc{\elll}{\ell}

\nwc{\IA}{\mathbb{A}} 
\nwc{\IB}{\mathbb{B}} 
\nwc{\IC}{\mathbb{C}} 
\nwc{\ID}{\mathbb{D}} 
\nwc{\IE}{\mathbb{E}} 
\nwc{\IF}{\mathbb{F}} 
\nwc{\IG}{\mathbb{G}} 
\nwc{\IH}{\mathbb{H}} 
\nwc{\IN}{\mathbb{N}} 
\nwc{\IP}{\mathbb{P}} 
\nwc{\IQ}{\mathbb{Q}} 
\nwc{\IR}{\mathbb{R}} 
\nwc{\IS}{\mathbb{S}} 
\nwc{\IT}{\mathbb{T}} 
\nwc{\IZ}{\mathbb{Z}} 

\def\bbleft{{\mathchoice {[\mskip-3mu {[}} {[\mskip-3mu {[}}{[\mskip-4mu {[}}{[\mskip-5mu {[}}}}
\def\bbright{{\mathchoice {]\mskip-3mu {]}} {]\mskip-3mu {]}}{]\mskip-4mu {]}}{]\mskip-5mu {]}}}}
\nwc{\setK}{\bbleft 1,K \bbright}
\nwc{\setN}{\bbleft 1,\cN \bbright}
 


\nwc{\va}{{\bf a}}
\nwc{\vb}{{\bf b}}
\nwc{\vc}{{\bf c}}
\nwc{\vd}{{\bf d}}
\nwc{\ve}{{\bf e}}
\nwc{\vf}{{\bf f}}
\nwc{\vg}{{\bf g}}
\nwc{\vh}{{\bf h}}
\nwc{\vi}{{\bf i}}
\nwc{\vI}{{\bf I}}
\nwc{\vj}{{\bf j}}
\nwc{\vk}{{\bf k}}
\nwc{\vl}{{\bf l}}
\nwc{\vm}{{\bf m}}
\nwc{\vM}{{\bf M}}
\nwc{\vn}{{\bf n}}
\nwc{\vo}{{\it o}}
\nwc{\vp}{{\bf p}}
\nwc{\vq}{{\bf q}}
\nwc{\vr}{{\bf r}}
\nwc{\vs}{{\bf s}}
\nwc{\vt}{{\bf t}}
\nwc{\vu}{{\bf u}}
\nwc{\vv}{{\bf v}}
\nwc{\vw}{{\bf w}}
\nwc{\vx}{{\bf x}}
\nwc{\vy}{{\bf y}}
\nwc{\vz}{{\bf z}}
\nwc{\bal}{\blds{\alpha}}
\nwc{\bep}{\blds{\epsilon}}
\nwc{\barbep}{\overline{\blds{\epsilon}}}
\nwc{\bnu}{\blds{\nu}}
\nwc{\bmu}{\blds{\mu}}
\nwc{\bet}{\blds{\eta}}



\nwc{\bk}{\blds{k}}
\nwc{\bm}{\blds{m}}
\nwc{\bM}{\blds{M}}
\nwc{\bp}{\blds{p}}
\nwc{\bq}{\blds{q}}
\nwc{\bn}{\blds{n}}
\nwc{\bv}{\blds{v}}
\nwc{\bw}{\blds{w}}
\nwc{\bx}{\blds{x}}
\nwc{\bxi}{\blds{\xi}}
\nwc{\by}{\blds{y}}
\nwc{\bz}{\blds{z}}


\nwc{\cA}{\ml{A}}
\nwc{\cB}{\ml{B}}
\nwc{\cC}{\ml{C}}
\nwc{\cD}{\ml{D}}
\nwc{\cE}{\ml{E}}
\nwc{\cF}{\ml{F}}
\nwc{\cG}{\ml{G}}
\nwc{\cH}{\ml{H}}
\nwc{\cI}{\ml{I}}
\nwc{\cJ}{\ml{J}}
\nwc{\cK}{\ml{K}}
\nwc{\cL}{\ml{L}}
\nwc{\cM}{\ml{M}}
\nwc{\cN}{\ml{N}}
\nwc{\cO}{\ml{O}}
\nwc{\cP}{\ml{P}}
\nwc{\cQ}{\ml{Q}}
\nwc{\cR}{\ml{R}}
\nwc{\cS}{\ml{S}}
\nwc{\cT}{\ml{T}}
\nwc{\cU}{\ml{U}}
\nwc{\cV}{\ml{V}}
\nwc{\cW}{\ml{W}}
\nwc{\cX}{\ml{X}}
\nwc{\cY}{\ml{Y}}
\nwc{\cZ}{\ml{Z}}

\nwc{\fA}{\mathfrak{a}}
\nwc{\fB}{\mathfrak{b}}
\nwc{\fC}{\mathfrak{c}}
\nwc{\fD}{\mathfrak{d}}
\nwc{\fE}{\mathfrak{e}}
\nwc{\fF}{\mathfrak{f}}
\nwc{\fG}{\mathfrak{g}}
\nwc{\fH}{\mathfrak{h}}
\nwc{\fI}{\mathfrak{i}}
\nwc{\fJ}{\mathfrak{j}}
\nwc{\fK}{\mathfrak{k}}
\nwc{\fL}{\mathfrak{l}}
\nwc{\fM}{\mathfrak{m}}
\nwc{\fN}{\mathfrak{n}}
\nwc{\fO}{\mathfrak{o}}
\nwc{\fP}{\mathfrak{p}}
\nwc{\fQ}{\mathfrak{q}}
\nwc{\fR}{\mathfrak{r}}
\nwc{\fS}{\mathfrak{s}}
\nwc{\fT}{\mathfrak{t}}
\nwc{\fU}{\mathfrak{u}}
\nwc{\fV}{\mathfrak{v}}
\nwc{\fW}{\mathfrak{w}}
\nwc{\fX}{\mathfrak{x}}
\nwc{\fY}{\mathfrak{y}}
\nwc{\fZ}{\mathfrak{z}}


\nwc{\tA}{\widetilde{A}}
\nwc{\tB}{\widetilde{B}}
\nwc{\tE}{E^{\vareps}}
\nwc{\tk}{\tilde k}
\nwc{\tN}{\tilde N}
\nwc{\tP}{\widetilde{P}}
\nwc{\tQ}{\widetilde{Q}}
\nwc{\tR}{\widetilde{R}}
\nwc{\tV}{\widetilde{V}}
\nwc{\tW}{\widetilde{W}}
\nwc{\ty}{\tilde y}
\nwc{\teta}{\tilde \eta}
\nwc{\tdelta}{\tilde \delta}
\nwc{\tlambda}{\tilde \lambda}
\nwc{\ttheta}{\tilde \theta}
\nwc{\tvartheta}{\tilde \vartheta}
\nwc{\tPhi}{\widetilde \Phi}
\nwc{\tpsi}{\tilde \psi}
\nwc{\tmu}{\tilde \mu}

\nwc{\To}{\longrightarrow} 

\nwc{\ad}{\rm ad}
\nwc{\eps}{\epsilon}
\nwc{\ep}{\epsilon}
\nwc{\vareps}{\varepsilon}

\def\ep{\epsilon}
\def\tr{{\rm tr}}

\def\sq2{\sqrt{2}}

\def\t2{{\mathbb T}^2}
\def\s2{{\mathbb S}^2}

\def\R{\mathbb{R}}

\def\Z{\mathbb{Z}}
\def\C{\mathbb{C}}

\nwc{\lap}{\bigtriangleup}
\nwc{\rest}{\restriction}
\nwc{\Diff}{\operatorname{Diff}}
\nwc{\diam}{\operatorname{diam}}
\nwc{\Res}{\operatorname{Res}}
\nwc{\Spec}{\operatorname{Spec}}
\nwc{\Vol}{\operatorname{Vol}}
\nwc{\Op}{\operatorname{Op}}
\nwc{\supp}{\operatorname{supp}}
\nwc{\Span}{\operatorname{span}}

\nwc{\dia}{\varepsilon}
\nwc{\cut}{f}
\nwc{\qm}{u_\hbar}

\def\hto0{\xrightarrow{\hbar\to 0}}

\def\rto0{\xrightarrow{r\to 0}}

\nwc{\la}{\langle}
\nwc{\ra}{\rangle}
\nwc{\lp}{\left(}
\nwc{\rp}{\right)}

\nwc{\bequ}{\begin{equation}}
\nwc{\be}{\begin{equation}}
\nwc{\ben}{\begin{equation*}}
\nwc{\bea}{\begin{eqnarray}}
\nwc{\bean}{\begin{eqnarray*}}
\nwc{\bit}{\begin{itemize}}
\nwc{\bver}{\begin{verbatim}}

%\nwc{\eal}{\end{align}}
\nwc{\eequ}{\end{equation}}
\nwc{\ee}{\end{equation}}
\nwc{\een}{\end{equation*}}
\nwc{\eea}{\end{eqnarray}}
\nwc{\eean}{\end{eqnarray*}}
\nwc{\eit}{\end{itemize}}
\nwc{\ever}{\end{verbatim}}


\begin{document}

\begin{abstract}
We study Schr\"odinger operators on trees and construct associated Poisson kernels, in analogy to the laplacian on the unit disc. We show that in the absolutely continuous spectrum, the generalized eigenfunctions of the operator are generated by the Poisson kernel. We use this to define a ``Fourier transform'', giving a Fourier inversion formula and a Plancherel formula, where the domain of integration runs over the energy parameter and the geometric boundary of the tree.
\end{abstract}

\maketitle

\section{Introduction}                     \label{sec:intropoi}
\subsection{Presentation of results}In this note we are interested in the spectral theory of discrete Schr\"odinger operators on trees. Our main purpose is to use the simple combinatorics of paths on trees to understand better the geometric structure of generalized eigenfunctions.

Let $\mathcal{T}$ be a tree with a uniformly bounded degree. By some abuse of notation, we also denote its vertex set by $\mathcal{T}$. We study a Schr\"odinger operator $H$ on $\mathcal{T}$ given by
\[
H = \mathcal{A}+V \, ,
\]
where $\mathcal{A}$ is the adjacency matrix
\[
(\mathcal{A}\psi)(v) = \sum_{w\sim v} \psi(w)
\]
and $V: \mathcal{T}\To \IR$ is a real-valued potential, so that $H$ is self-adjoint on its domain
\[
D(H) = \{\psi \in \ell^2(\mathcal{T}) : V\psi \in \ell^2(\mathcal{T})\} \, .
\]
Here, $w\sim v$ means that $w$ and $v$ are nearest neighbors.

Let us give some background on the theory of generalized eigenfunctions.

If $F$ is a bounded Borel function on $\R$, we know by the spectral theorem that for any $v,w\in \mathcal{T}$, we may find a Borel measure $\mu_{v,w}$ on $\R$ such that
\begin{equation}             \label{eq:decompo1}
F(H)(v,w) = \int_{\R} F(E) \,\dd \mu_{v,w}(E) \, .
\end{equation}
Here $F(H)(v,w)=\langle \delta_v,F(H)\delta_w\rangle$ is the matrix of $F(H)$ in the basis $\{\delta_v\}_{v\in\mathcal{T}}$.
The theory of generalized eigenfunction expansions refines this expression by constructing a spectral measure $\rho_H$ on $\IR$ and functions $Q_{E,w}: \mathcal{T} \To \IC$ satisfying $HQ_{E,w} = EQ_{E,w}$, such that
\begin{equation}             \label{eq:decompo2}
F(H)(v,w) = \int_{\R} F(E) Q_{E,w}(v)\,\dd \rho_H(E) \, .
\end{equation}
See \cite[Section 7]{Kir} and \cite[Chapter 15]{BSU} for details. Such an expansion proved to be useful in the context of Anderson localization when $H_{\omega}$ is a random Schr\"odinger operator. In fact, pure point spectrum in an interval $I$ will follow if one shows that for $\rho_H$-a.e. $E\in I$, the function $Q_{E,w}$ lives in $\ell^2(\mathcal{T})$. Expression (\ref{eq:decompo2}) is also used to estimate the Hilbert-Schmidt norms that arise in the study of dynamical localization; see \cite[Lemma 4.1]{GK}.

Efforts have been made to push the expansion further. In \cite[Section 15.3]{BSU} and \cite{LT}, the authors abstractly construct functions $\varphi_{E,j}:\mathcal{T}\To \C$, $j=1,\dots,N_E \le \infty$, such that $H\varphi_{E,j}=E\varphi_{E,j}$ for $\rho_H$-a.e. $E$, and
\begin{equation}         \label{eq:decompo3}
F(H)(v,w) = \int_{\R} F(E) \sum_{j=1}^{N_E} \varphi_{E,j}(v)\overline{\varphi_{E,j}(w)}\,\dd \rho_H(E) \, .
\end{equation}
In other words, $Q_{E,w}(v) = \sum_{j=1}^{N_E} \varphi_{E,j}(v)\overline{\varphi_{E,j}(w)}$ for $\rho_H$-a.e. $E$. The importance of this expression is that it allows to define an abstract Fourier transform by
\[
\widehat{f}_j(E) = \langle \varphi_{E,j},f\rangle_{\C^\mathcal{T}} = \sum_{w\in \mathcal{T}} \overline{\varphi_{E,j}(w)} f(w)
\]
for functions $f:\mathcal{T}\to \C$ of finite support. The functions $\varphi_{E,j}$ thus play the role of the ``plane waves'' for the euclidean laplacian. As a consequence of (\ref{eq:decompo3}), one obtains a Fourier inversion formula
\[
f(v) = \int_{\R} \sum_{j=1}^{N_E} \widehat{f}_j(E) \,\varphi_{E,j}(v)\,\dd \rho_H(E) \, ,
\]
and a Plancherel formula, namely if $f,g:\mathcal{T}\to \C$ have finite support, then
\begin{equation}          \label{planchabstract}
\langle f,g \rangle_{\ell^2(\mathcal{T})} = \int_{\R}\sum_{j=1}^{N_E} \overline{\widehat{f}_j(E)}\widehat{g}_j(E)\,\dd \rho_H(E) \, .
\end{equation}
The Plancherel formula can be extended by continuity to all $f,g\in \ell^2(\mathcal{T})$. Moreover, the previous expansions are actually valid for general self-adjoint operators on abstract Hilbert spaces $\mathscr{H}$ (see \cite{BSU}).

In this paper we show that for particular models, one can obtain expansion formulas which are very explicit. Our approach is totally different, it uses a direct geometric analysis of the Green function, and is inspired by an existing analogy between the adjacency matrix on the tree and the laplacian on the unit disc. 

The functions $\varphi_{E,j}$ in our case are replaced by explicit functions $P_{E,\xi}$, which we call Poisson kernel. The name ``Poisson kernel'' is borrowed from the potential theory of the unit disc, which we briefly recall in Section~\ref{sec:backpois}. The parameter $\xi$ runs over the geometric boundary $\partial \mathcal{T}$ of the tree. In Proposition \ref{p:exists}, we first establish the existence of the Poisson kernel for Lebesgue-a.e. $E\in \R$, and show that $HP_{E,\xi} = EP_{E,\xi}$. Next, assuming the Schr\"odinger operator $H$ has purely absolutely continuous spectrum in some measurable set $I$, we construct an explicit positive measure $\nu_E$ on $\partial \mathcal{T}$ such that
\begin{equation}             \label{eq:decompo0}
F(H)(v,w) = \int_I \int_{\partial \mathcal{T}} F(E) P_{E,\xi}(v) \overline{P_{E,\xi}(w)}\,\dd \nu_E(\xi)\,\dd E
\end{equation}
for any bounded Borel function $F:I\to \C$.

The assumption of absolutely continuous spectrum is known to hold for example if $H=\mathcal{A}$ on $\mathcal{T}$, for many trees $\mathcal{T}$ of finite cone type (in particular, if $\mathcal{T}$ is a regular tree). In this case, the spectrum is purely absolutely continuous. In fact, the authors in \cite{KLW} establish more generally that the absolutely continuous spectrum of $\mathcal{A}$ remains stable under small radially symmetric perturbations $H=\mathcal{A} + \lambda V$, if the tree is non-regular. Large parts of the absolutely continuous spectrum also remain stable under small random perturbations $H_{\omega} = \mathcal{A}+\lambda V_{\omega}$; see \cite{KLW2}. For example, in the particular case of $(q+1)$-regular trees, it is shown in \cite{Klein, FHS} that the Anderson model has purely absolutely continuous spectrum almost surely in any interval $I=[-E_0,E_0] \subset (-2\sqrt{q},2\sqrt{q})$, if the disorder $\lambda$ is small enough. The results of Aizenman and Warzel \cite{AW} go further, by showing existence of absolutely continuous spectrum outside $(-2\sqrt{q},2\sqrt{q})$. Our results thus apply to these models.

As in the previous discussion, we define the Fourier transform
\[
\widehat{f}_{\xi}(E) = \langle P_{E,\xi}, f \rangle_{\C^\mathcal{T}}
\]
for $f:\mathcal{T}\to \C$ of finite support. The content of Theorem \ref{thm:planch} is an inversion formula
\[
[F(H)f](v) = \int_I \int_{\partial \mathcal{T}} F(E) \widehat{f}_{\xi}(E) P_{E,\xi}(v)\,\dd \nu_E(\xi)\,\dd E
\]
which implies a Plancherel formula
\[
\langle f,F(H)g\rangle = \int_I \int_{\partial \mathcal{T}} F(E)\overline{\widehat{f}_{\xi}(E)}\widehat{g}_{\xi}(E)\,\dd \nu_E(\xi)\,\dd E \,,
\]
for any $f$ and $g$ on $\mathcal{T}$ of finite support.

In Theorem \ref{thm:gef}, we obtain a representation formula for eigenfunctions of the Schr\"odinger operator $H$ by integrals of the Poisson kernel over the boundary. This is valid for complex eigenvalues $\gamma\in \IC\setminus \IR$, as well as for almost-every real eigenvalue in the absolutely continuous spectrum (the associated eigenfunctions are necessarily not in $\ell^2$).

The analogy between the spectral theory on regular trees and on the unit disc was first put forward in the influential paper \cite{Cartier}. There, seeing the tree as a Cayley graph for a free group, the author obtains an isomorphism between the space of harmonic functions (i.e. solutions of $\mathcal{A}f=0$), and a space of distributions on $\partial\mathcal{T}$. Our work builds on previous constructions in \cite{Chouc, FTP, FTN, Ao, FTS}, where expansions in Poisson kernel are proved for $H=\mathcal{A}$ on a regular tree, and for anisotropic random walks on the free product $\Z/2\Z \star \dots \star \Z/2\Z$. In those situations, the tree has a homogeneous structure, and the results can be used to understand the unitary representations of the automorphism groups of the tree and of the group $\IZ/{2\IZ}\star \IZ/{2\IZ}\star\ldots \star \IZ/{2\IZ}$.
This work arose from the remark that the aforementioned theories may be extended to more general Schr\"odinger operators on trees. However, since those trees have no homogeneous structures, no representation theory will be involved.

The Plancherel formula in our Theorem~\ref{thm:planch} implies that
\[
\|F(H)K\|_{HS}^2 = \int_I \int_{\partial \mathcal{T}} |F(E)|^2 \|KP_{E,\xi}\|_{\ell^2(\cT)}^2\,\dd \nu_E(\xi)\,\dd E
\]
for any $K$ on $\mathcal{T}\times \mathcal{T}$ of finite support. This provides a convenient formula to estimate Hilbert-Schmidt norms, as the Poisson kernel plays an interesting geometric role. In fact, the first named author already used it in \cite{A} in the study of quantum ergodicity for homogeneous and anisotropic random walks on regular trees.

\begin{rem}
The results of this paper generalize without difficulty to self-adjoint operators of the form $(H_p\psi)(v) = \sum_w p_v(w) f(w)$, where $p_v(w) = 0$ if $d(v,w)>1$, assuming all coefficients $p_v(w)$ are real, with $p_v(w)=p_w(v)$ and $p_v(w)\neq 0$ whenever $v\sim w$. For more details see Remark \ref{r:more}.
\end{rem}

\subsection{Background on Poisson kernels}               \label{sec:backpois}
The word ``Poisson kernel'' is traditionally used in the potential theory of the $2$-dimensional disc $\ID=\{ z=x+iy\in \C, |z|<1\}$. In this context, the Poisson kernel is a family of functions parametrized by the boundary of the disc
$\partial \ID=\{ z=x+iy\in \C, |z|=1\}$. For $\omega\in \partial \ID$ and $z\in \ID$, we let
\[
P_\omega(z)=\frac{1-|z|^2}{|z-\omega|^2} \,.
\]
For all $\omega$, $P_\omega$ is a solution to $\Delta P_\omega=0$, where $\Delta=\partial_x^2+\partial_y^2$ is the euclidean laplacian. The Poisson kernel is useful to solve the Laplace problem on the disc~: if $f$ is an integrable function on $\partial \ID$ for the Lebesgue measure, and if we put
\[
(Pf)(z)=\int_{\partial \ID} P_\omega(z) f(\omega)d\omega \,,
\]
then $Pf$ is a solution to $\Delta (Pf)=0$. Moreover, for almost-all $\omega\in \partial \ID$, the limit of $Pf(z)$ as $z$ tends to a boundary point $\omega$ (in a nontangential way) is equal to $f(\omega)$. If we started with a continuous $f$, then this is true for {\em all} $\omega$ (see for instance \cite[Chapter 11]{Rud}).

If, instead of the euclidean laplacian on $\ID$, we consider the hyperbolic laplacian
\[
\Delta_{hyp}=\frac{(1-|z|^2)^2}{4}(\partial_x^2+\partial_y^2) \,,
\]
the picture is even more complete. Define now, for $s\in\IC$,
\[
P_{\omega, s}(z)=\left(\frac{1-|z|^2}{|z-\omega|^2}\right)^s \,.
\]
We have $\Delta_{hyp}P_{\omega, s}=-s(1-s)P_{\omega, s}$, that is to say, the functions $P_{\omega, s}$ are eigenfunctions of the hyperbolic laplacian. In this context, the fundamental work by Helgason \cite{Hel} gives
\begin{itemize}
\item an integral representation theorem for arbitrary eigenfunctions~: ``every eigenfunction of eigenvalue $-s(1-s)$ can be represented in a unique way by integrating the Poisson kernel $P_{\omega, s}$ against some analytic functional over the boundary $\partial \ID$ '' (\cite{Hel}, Theorem 4.3);
\item a ``Fourier transform'' allowing to represent compactly supported functions as superpositions of the Poisson kernels $P_{\omega, s}$, summed over $\omega\in \partial\ID$ and over the spectral parameter $s$. The Poisson kernels are constant on horocycles and thus play the role of ``plane waves'' (\cite{Hel}, Theorem 4.2 (i));
\item a Plancherel formula expressing the $L^2$-norm of a function in terms of its Fourier representation  (\cite{Hel}, Theorem 4.2 (ii)).
\end{itemize}

%

{\bf Open questions.} It would be interesting to ask if our construction of the ``Poisson kernel'' for Schr\"odinger operators on trees can be extended to other types of graphs, assumed for instance to be Gromov-hyperbolic. In the region of ``positive spectrum'' (meaning the region of existence of positive eigenfunctions), the question has been extensively studied, with some very recent remarkable advances. In that context, what we called ``Poisson kernel'' bears the name ``Martin kernel'', and allows to represent all {\em non-negative} eigenfunctions by an integral of the kernel over the boundary. For Gromov-hyperbolic graphs, the coincidence of the Martin boundary with the geometric boundary has been proven in \cite{Anc1, Anc2} in the interior of the positive spectrum, in \cite{GL, G} at the top of the positive spectrum, which coincides with the bottom of the $\ell^2$-spectrum. This relies highly on the fact that we are in a region where the Green function is positive. Inside the $\ell^2$-spectrum, not much is known in general. Our construction relies on the existence of absolutely continuous spectrum, but except for trees, no examples of Gromov-hyperbolic graphs with absolutely continuous spectrum seem to be known (see \cite{KLW3} for a more detailed discussion).

A similar question arises in the case
of the spectral theory of the laplacian on $\widetilde M$, the universal cover of a compact negatively curved surface $M$.  If $M$ has constant curvature $-1$, then $\widetilde M$ is isometric to the hyperbolic disc, the spectrum is purely absolutely continuous, and the spectral theory is completely described by the Helgason-Fourier transform described above \cite{Hel}. However, if the curvature is variable, not much seems to be known. In the positive spectrum, a Martin kernel can be constructed, and allows for integral representation of all positive eigenfunctions by integrating the ``Martin kernel'' over the geometric boundary \cite{Anc2, LL}. But again, the nature of the $\ell^2$-spectrum is not known, which prevents from going further towards a theory of Poisson kernels in that part of the spectrum.

\section{The Green function on the tree}

Given $v,w \in \mathcal{T}$ with $v \sim w$, we denote by $\mathcal{T}^{(v|w)}$ the tree obtained by removing from $\mathcal{T}$ the branch emanating from $v$ that passes through $w$. We keep the vertex $v$, so $v\in \mathcal{T}^{(v|w)}$.

We define the restriction $H^{(v|w)}(u,u') = H(u,u')$ if $u,u'\in \mathcal{T}^{(v|w)}$ and zero otherwise. The Green functions are denoted by 
\[
G(u,u';\gamma) = \langle \delta_u,(H-\gamma)^{-1}\delta_{u'}\rangle \quad \text{and} \quad G^{(v|w)}(u,u';\gamma) = \langle \delta_u, (H^{(v|w)}-\gamma)^{-1}\delta_{u'}\rangle
\]
for $\gamma$ in the resolvent set of $H$ and $H^{(v|w)}$, respectively.

Recall that for any $v\in \mathcal{T}$ and $\gamma \in \C \setminus \R$, we have
\begin{equation}               \label{eq:green1}
G(v,v;\gamma) = \frac{1}{V(v) - \gamma - \sum_{u \sim v} G^{(u|v)}(u,u;\gamma)} \, .
\end{equation}
If $v,w \in \mathcal{T}$ and $v \sim w$, we also have
\begin{equation}            \label{eq:green2}
G^{(v|w)}(v,v;\gamma) = \frac{1}{V(v) - \gamma - \sum_{u \in \mathcal{N}_v \setminus \{w\}} G^{(u|v)}(u,u;\gamma)} \, ,
\end{equation}
where $\mathcal{N}_v = \{u:u\sim v\}$. These identities are well-known and follow from the resolvent identity; see \cite[Proposition 2.1]{Klein} for a proof.

If $(v_0,\ldots,v_k)$ is a non-backtracking path in $\mathcal{T}$ and $\gamma \in \C \setminus \R$, we have
\begin{equation}                  \label{eq:multigreen}
G(v_0,v_k;\gamma) = (-1)^k \prod_{j=0}^{k-1} G^{(v_j|v_{j+1})}(v_j,v_j;\gamma) \cdot G(v_k,v_k;\gamma) \, .
\end{equation}
This is also well-known; see \cite[Chapter 1]{FTS} or \cite[Equation (2.8)]{Klein} and use induction.

Given $\gamma \in \C \setminus \R$, we denote
\[
G(v,v;\gamma) = \frac{-1}{2m_v^{\gamma}} \quad \text{and} \quad \zeta_w^{\gamma}(v) = -G^{(v|w)}(v,v;\gamma) \, .
\]

\begin{lem}                     \label{lem:zetapot}
For any $v \in \mathcal{T}$ and $\gamma = E+i\eta  \in \C^+=\{z\in \IC, \Im z >0\}$, we have
\begin{equation}               \label{eq:green3}
\gamma = V(v) + \sum_{u \sim v} \zeta_v^{\gamma}(u)+2m^{\gamma}_v \quad \text{and} \quad \gamma = V(v) + \sum_{u \in \mathcal{N}_v \setminus \{w\}} \zeta_v^{\gamma}(u) + \frac{1}{\zeta_w^{\gamma}(v)} \, .
\end{equation}
For any non-backtracking path $(v_0,\dots,v_k)$ in $\mathcal{T}$,
\begin{equation}             \label{eq:multigreen2}
G(v_0,v_k;\gamma) = \frac{-\prod_{j=0}^{k-1} \zeta_{v_{j+1}}^{\gamma}(v_j)}{2m^{\gamma}_{v_k}} \, ,
\end{equation}
\begin{equation}          \label{eq:multigreen3}
G(v_0,v_k;\gamma) = \zeta_{v_1}^{\gamma}(v_0) G(v_1,v_k;\gamma) = \zeta_{v_{k-1}}^{\gamma}(v_k) G(v_0,v_{k-1};\gamma) \, ,
\end{equation}
\begin{equation}           \label{eq:multigreen4}
G^{(v_k|v_{k+1})}(v_0,v_k;\gamma) = - \prod_{j=0}^k \zeta_{v_{j+1}}^{\gamma}(v_j) \quad \text{and} \quad G^{(v_1|v_0)}(v_1,v_k;\gamma) = - \prod_{j=0}^{k-1} \zeta_{v_j}^{\gamma}(v_{j+1}) \, .
\end{equation}
Also, for any $w\sim v$, we have
\begin{equation}           \label{eq:mv}
\zeta_w^{\gamma}(v) = \frac{m_w^{\gamma}}{m_v^{\gamma}} \,\zeta_v^{\gamma}(w) \quad \text{and} \quad 
\frac{1}{\zeta_w^{\gamma}(v)} - \zeta_v^{\gamma}(w) = 2m^{\gamma}_v \, .
\end{equation}
For any $v,w\in \mathcal{T}$, we have
\begin{equation}              \label{eq:greensym}
G(v,w;\gamma) = G(w,v;\gamma) \, .
\end{equation}
Next,
\begin{equation}             \label{eq:sumzeta}
\sum_{u\in \mathcal{N}_v\setminus \{w\}} |\Im \zeta_v^{\gamma}(u)| = \frac{|\Im \zeta_w^{\gamma}(v)|}{|\zeta_w^{\gamma}(v)|^2} - \eta \, .
\end{equation}
Finally, if $\Psi_{\gamma,v}(w) = \frac{1}{\pi}\Im G(v,w;\gamma)$, then for any path $(v_0,\dots,v_k)$ in $\mathcal{T}$,
\begin{equation}              \label{eq:idpsi}
 \Psi_{\gamma,v_0}(v_k) - \zeta_{v_{k-1}}^{\gamma}(v_k)\Psi_{\gamma,v_0}(v_{k-1}) = \pi^{-1}\Im \zeta_{v_{k-1}}^{\gamma}(v_k) \cdot \overline{G(v_0,v_{k-1};\gamma)} \, .
\end{equation}
\end{lem}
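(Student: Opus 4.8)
The plan is to derive every identity from the recursions (\ref{eq:green1})--(\ref{eq:multigreen}) together with the substitutions $G(v,v;\gamma)=-1/(2m_v^\gamma)$ and $G^{(v|w)}(v,v;\gamma)=-\zeta_w^\gamma(v)$. For (\ref{eq:green3}) I would take reciprocals in (\ref{eq:green1}): this gives $-2m_v^\gamma=V(v)-\gamma-\sum_{u\sim v}G^{(u|v)}(u,u;\gamma)$, and replacing $G^{(u|v)}(u,u;\gamma)=-\zeta_v^\gamma(u)$ and rearranging yields the first equation; the second is the same manipulation on (\ref{eq:green2}), whose left-hand side is $-\zeta_w^\gamma(v)$, so its reciprocal contributes the term $1/\zeta_w^\gamma(v)$. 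The symmetry (\ref{eq:greensym}) is independent and immediate: $H$ has real entries and equals its transpose, so $(H-\gamma)^{-1}$ is a complex-symmetric matrix and $G(v,w;\gamma)=G(w,v;\gamma)$; the same holds for each restriction $H^{(v|w)}$, which I will use below.

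Substituting $G^{(v_j|v_{j+1})}(v_j,v_j;\gamma)=-\zeta_{v_{j+1}}^\gamma(v_j)$ and $G(v_k,v_k;\gamma)=-1/(2m_{v_k}^\gamma)$ into (\ref{eq:multigreen}) makes the two factors $(-1)^k$ cancel, giving (\ref{eq:multigreen2}). Then (\ref{eq:multigreen3}) follows by peeling the $j=0$ factor off the product, and its second form by applying the first to the reversed path and using (\ref{eq:greensym}). For (\ref{eq:multigreen4}) I would run (\ref{eq:multigreen2}) \emph{inside} the subtree $\mathcal{T}^{(v_k|v_{k+1})}$ along $(v_0,\dots,v_k)$: the factors $\zeta_{v_{j+1}}^\gamma(v_j)$ with $j<k$ are unchanged since the deleted branch beyond $v_{k+1}$ lies outside each $\mathcal{T}^{(v_j|v_{j+1})}$, while the terminal coefficient $1/(2m_{v_k}^\gamma)$ computed in that subtree equals $\zeta_{v_{k+1}}^\gamma(v_k)$ by definition; the second half is the same computation in $\mathcal{T}^{(v_1|v_0)}$ along the reversed path. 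The pair (\ref{eq:mv}) then drops out: subtracting the two equations of (\ref{eq:green3}) for $w\sim v$ gives $1/\zeta_w^\gamma(v)-\zeta_v^\gamma(w)=2m_v^\gamma$, and writing this with $v,w$ swapped and taking the quotient (valid as $\Im G(v,v;\gamma)>0$ forces $m_v^\gamma\neq 0$) produces $\zeta_w^\gamma(v)/\zeta_v^\gamma(w)=m_w^\gamma/m_v^\gamma$.

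The two steps needing real care are (\ref{eq:sumzeta}) and (\ref{eq:idpsi}). For (\ref{eq:sumzeta}) I would take imaginary parts of the second equation of (\ref{eq:green3}); with $V(v)$ real, $\Im\gamma=\eta$, and $\Im(1/\zeta_w^\gamma(v))=-\Im\zeta_w^\gamma(v)/|\zeta_w^\gamma(v)|^2$ this reads $\eta=\sum_{u\in\mathcal{N}_v\setminus\{w\}}\Im\zeta_v^\gamma(u)-\Im\zeta_w^\gamma(v)/|\zeta_w^\gamma(v)|^2$. The essential input is the Herglotz positivity $\Im G^{(u|v)}(u,u;\gamma)>0$ for $\gamma\in\IC^+$, which forces every $\Im\zeta_v^\gamma(u)<0$; replacing $\Im\zeta_v^\gamma(u)=-|\Im\zeta_v^\gamma(u)|$ throughout and rearranging gives (\ref{eq:sumzeta}). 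Finally (\ref{eq:idpsi}) is algebra on top of (\ref{eq:multigreen3}): setting $z=\zeta_{v_{k-1}}^\gamma(v_k)$ and $w=G(v_0,v_{k-1};\gamma)$, we have $\pi\Psi_{\gamma,v_0}(v_k)=\Im(zw)=\Re(z)\Im(w)+\Im(z)\Re(w)$ and $\pi\Psi_{\gamma,v_0}(v_{k-1})=\Im(w)$, so $\pi\big(\Psi_{\gamma,v_0}(v_k)-z\Psi_{\gamma,v_0}(v_{k-1})\big)=\Im(z)\Re(w)-i\,\Im(z)\Im(w)=\Im(z)\,\overline{w}$, which is exactly the claim. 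I expect the difficulty to be bookkeeping rather than depth: the subtree-invariance of the $\zeta$ factors in (\ref{eq:multigreen4}) and the sign determination via the Herglotz property in (\ref{eq:sumzeta}) are the only places demanding genuine attention.
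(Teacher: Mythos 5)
Your proof is correct, and its overall strategy --- deriving every identity from \eqref{eq:green1}--\eqref{eq:multigreen} after the substitutions $G(v,v;\gamma)=-1/(2m_v^{\gamma})$ and $G^{(v|w)}(v,v;\gamma)=-\zeta_w^{\gamma}(v)$ --- is the same as the paper's. You deviate in three local spots, all legitimately. For \eqref{eq:greensym} you invoke the complex symmetry of $(H-\gamma)^{-1}$ (since $H$ is real and symmetric), whereas the paper derives symmetry combinatorially by peeling $\zeta$-factors off the path and its reversal and observing that the two products coincide; your route is shorter but trades path algebra for the operator-theoretic facts that $G(v,w;\gamma)=\overline{G(w,v;\bar{\gamma})}$ and that $H$ commutes with complex conjugation. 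Consequently, for the second equality of \eqref{eq:multigreen3} you use the reversed path together with \eqref{eq:greensym}, while the paper uses the conjugation identities $G(v_0,v_k;\gamma)=\overline{G(v_k,v_0;\bar{\gamma})}$ and $\overline{\zeta_w^{\bar{\gamma}}(v)}=\zeta_w^{\gamma}(v)$ directly; both work, and your ordering (symmetry first, established independently) is internally consistent. For the first identity of \eqref{eq:mv} you subtract the two equations of \eqref{eq:green3} written at $v$ and at $w$ and take a quotient, correctly flagging that $m_v^{\gamma},m_w^{\gamma}\neq 0$ by the Herglotz property, whereas the paper reads it off from $\zeta_w^{\gamma}(v)=\frac{G(v,v;\gamma)}{G(w,w;\gamma)}\zeta_v^{\gamma}(w)$; your version is an equally valid two-line alternative. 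The remaining items --- the subtree computation for \eqref{eq:multigreen4} (where your observation that $(\mathcal{T}^{(v_k|v_{k+1})})^{(v_j|v_{j+1})}=\mathcal{T}^{(v_j|v_{j+1})}$ for $j<k$ is exactly the point that makes the induction in the paper work), the sign determination in \eqref{eq:sumzeta} via $\Im\zeta_v^{\gamma}(u)<0$, and the $\Im(zw)$ algebra for \eqref{eq:idpsi} --- match the paper's proof step for step.
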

\begin{proof}
The first three assertions follow from (\ref{eq:green1}), (\ref{eq:green2}) and (\ref{eq:multigreen}), respectively.

By (\ref{eq:multigreen2}), we have $G(v_0,v_k;\gamma) = \zeta_{v_1}^{\gamma}(v_0) \frac{-\prod_{j=1}^{k-1} \zeta_{v_{j+1}}^{\gamma}(v_j)}{2m^{\gamma}_{v_k}} = \zeta_{v_1}^{\gamma}(v_0) G(v_1,v_k;\gamma)$. Next, on the path $(v_k,v_{k-1},\dots,v_0)$, we have $G(v_0,v_k;\gamma) = \overline{G(v_k,v_0; \text{\ovF{$\gamma$}})} = \overline{\zeta_{v_{k-1}}^{ \text{\ovF{$\gamma$}}}(v_k)G(v_{k-1},v_0;\text{\ovF{$\gamma$}})}  = \zeta_{v_{k-1}}^{\gamma}(v_k) G(v_0,v_{k-1};\gamma)$. In the last equality, we used $\overline{\zeta_w^{ \text{\ovF{$\gamma$}}}(v)} = -\overline{\langle \delta_v,(H^{(v|w)}- \text{\ovF{$\gamma$}})^{-1}\delta_v\rangle} = -\langle \delta_v, (H^{(v|w)}-\gamma)^{-1}\delta_v\rangle = \zeta_w^{\gamma}(v)$. This proves (\ref{eq:multigreen3}).

As in \cite[Equation (2.8)]{Klein}, one proves that $G^{(v_k|v_{k+1})}(v_0,v_k;\gamma) = -G^{(v_{k-1}|v_k)}(v_0,v_{k-1};\gamma) \cdot G^{(v_k|v_{k+1})}(v_k,v_k;\gamma)$ by studying $\mathcal{T}^{(v_k|v_{k+1})}$ instead of $\mathcal{T}$. The claim on $G^{(v_k|v_{k+1})}(v_0,v_k;\gamma)$ follows by induction. For $G^{(v_1|v_0)}(v_1,v_k;\gamma)$, consider $(v_k,\dots,v_1)$ as before.

Since $G(v,w;\gamma) = \zeta_w^{\gamma}(v) G(w,w;\gamma)$ and $G(v,w;\gamma) = \zeta_v^{\gamma}(w) G(v,v;\gamma)$ for $v\sim w$, we have $\zeta_w^{\gamma}(v) = \frac{G(v,v;\gamma)}{G(w,w;\gamma)}\,\zeta_v^{\gamma}(w) = \frac{m_w^{\gamma}}{m_v^{\gamma}}\,\zeta_v^{\gamma}(w)$. 

Next, by (\ref{eq:green3}), $\gamma = V(v)+ \sum_{u \sim v} \zeta_v^{\gamma}(u)+2m^{\gamma}_v = V(v)+ \sum_{u \in \mathcal{N}_v \setminus \{w\}} \zeta_v^{\gamma}(u) + \frac{1}{\zeta_w^{\gamma}(v)}$, so we get $2m^{\gamma}_v = \frac{1}{\zeta_w^{\gamma}(v)} - \zeta_v^{\gamma}(w)$. 

Next, let $(v_0,\dots,v_k)$ with $v_0=v$ and $v_k=w$. Then $G(v,w;\gamma) = G(v_0,v_k;\gamma) = \zeta_{v_1}^{\gamma}(v_0)G(v_1,v_k;\gamma) = \prod_{j=0}^{k-1} \zeta_{v_{j+1}}^{\gamma}(v_j)G(v_k,v_k;\gamma)$. Considering the path $(v_k,v_{k-1},\dots,v_0)$, we have $G(w,v;\gamma) = G(v_k,v_0;\gamma) = \zeta_{v_1}^{\gamma}(v_0) G(v_k,v_1;\gamma) =\prod_{j=0}^{k-1}\zeta_{v_{j+1}}^{\gamma}(v_k)G(v_k,v_k;\gamma)$. Thus, $G(v,w;\gamma)=G(w,v;\gamma)$.

By (\ref{eq:green3}), $\sum_{u\in \mathcal{N}_v\setminus \{w\}} \Im \zeta_v^{\gamma}(u)= \Im(\gamma - V(v) - \frac{1}{\zeta_w^{\gamma}(v)}) =  \frac{\Im \zeta_w^{\gamma}(v)}{|\zeta_w^{\gamma}(v)|^2} + \eta$. Since $\Im \zeta_w^{\gamma}(v)<0$ for any $\gamma\in \C^+$ and $v\sim w$, relation (\ref{eq:sumzeta}) follows.

Since $G(v_0,v_k;\gamma) = \zeta_{v_{k-1}}^{\gamma}(v_k)G(v_0,v_{k-1};\gamma)$, $\Im(zz') = (\Re z)(\Im z')+(\Im z)(\Re z')$ and $z(\Im z') = (\Re z)(\Im z') +i(\Im z)(\Im z')$, we have
\[
\Im G(v_0,v_k;\gamma) - \zeta_{v_{k-1}}^{\gamma}(v_k) \Im G(v_0,v_{k-1};\gamma) = \Im \zeta_{v_{k-1}}^{\gamma}(v_k) \cdot  \overline{G(v_0,v_{k-1};\gamma)} \, .
\]
so (\ref{eq:idpsi}) follows.
\end{proof}

\section{The Poisson kernel}                \label{sec:poisson}

An \emph{arc} is a non-backtracking path $(u_0,\ldots,u_k)$. If $v,w \in \mathcal{T}$, there is a unique arc joining $v$ to $w$; we denote it by $[v,w]$.

A \emph{chain} is an infinite non-backtracking path $(u_0,u_1,\dots)$. Two chains $(u_0,u_1,\ldots)$ and $(v_0,v_1,\ldots)$ are equivalent if $u_k = v_{k+n}$ for some $n \in \Z$ and all $k$. Any equivalence class of chains $\xi$ has a representative starting at an arbitrary $v\in \mathcal{T}$, which we denote by $[v,\xi]$.


The \emph{geometric boundary} $\partial \mathcal{T}$ of $\mathcal{T}$ is the set of equivalence classes of chains.

In the following, we fix a vertex $o \in \mathcal{T}$ and call it the origin. We denote $|v| := d(v,o)$.

Given $u\in \mathcal{T}$, we denote $\mathcal{N}_u^+ = \{w \sim u : |w|=|u|+1\}$.

Given $v,w \in \mathcal{T}$, $v \neq w$, we define
\[
\partial \mathcal{T}_{v,w} = \{ \xi \in \partial \mathcal{T} : [v,w] \text{ is a subchain of } [v,\xi] \} \quad \text{and} \quad \partial \mathcal{T}_w := \partial \mathcal{T}_{o,w} \, .
\]
Then for any $v\in\mathcal{T}$ and $n\in\mathbb{N}$, $\{ \partial \mathcal{T}_{v,w} : d(v,w) = n \}$ is a partition of $\partial \mathcal{T}$.

Given $v,\xi \in \mathcal{T} \cup \partial \mathcal{T}$, $v \neq \xi$, define $v \wedge \xi$ as the vertex with maximal length in $[o,v] \cap [o,\xi]$. We also set  $v \wedge v = v$ for $v\in \mathcal{T}$.

\setlength{\unitlength}{1mm}
\thicklines
\begin{picture}(40,40)
\put(70,20){
\vector(2,1){25}
}
\put(70,20){
\line(2,-1){25}
}
\put(45,20){
\line(1,0){25}
}
\put(45,20){
\circle*{1.5}
}
\put(70,20){
\circle*{1.5}
}
\put(95,7.5){
\circle*{1.5}
}
\put(45,15){$o$}
\put(65,15){$v\wedge \xi$}
\put(98,31.5){$\xi$}
\put(98,5.5){$v$}
\end{picture}

For a sequence $(v_n)$ of elements of $\mathcal{T} \cup \partial \mathcal{T}$, we say that
\begin{equation}\label{e:conv}
v_n\To \xi \qquad \text{if}\qquad |v_n \wedge \xi| \To +\infty \,.
\end{equation}
This notion does not depend on the choice of the origin $o$.

Let $\gamma \in \C^+$ and $\xi \in \partial \mathcal{T}$. We define the \emph{Poisson kernel} of $H$ by
\begin{equation}\label{e:poisson}
P_{\gamma,\xi}(v) := \frac{G(v\wedge \xi,v;\gamma)}{G(o,v\wedge \xi;\gamma)} \, .
\end{equation}
 
The following lemma collects its basic properties.

\begin{lem}               \label{lem:poissonexpan}
Fix $\xi \in \partial \mathcal{T}$ and $\gamma \in \C^+$.
\begin{enumerate}[\rm (a)]
\item If $(v_0,\dots,v_k)$ is a path with $v_0=o$, $v_k=v$ and $v_r=v\wedge \xi$, then
\[
P_{\gamma,\xi}(v) = \frac{\prod_{j=r}^{k-1} \zeta_{v_j}^{\gamma}(v_{j+1})}{\prod_{j=0}^{r-1}\zeta_{v_{j+1}}^{\gamma}(v_j)}  = \frac{G^{(v_{r+1}|v_r)}(v_{r+1},v_k;\gamma)}{G^{(v_{r-1}|v_r)}(v_0,v_{r-1};\gamma)} \, .
\]
\item Let $u\in \mathcal{T}$ and $u_+\in \mathcal{N}_u^+$. Then
\[
P_{\gamma,\xi}(u_+) = \begin{cases} \zeta_u^{\gamma}(u_+) P_{\gamma,\xi}(u)&\text{if }\xi \notin \partial \mathcal{T}_{u_+},\\ \frac{1}{\zeta_{u_+}^{\gamma}(u)}P_{\gamma,\xi}(u)&\text{if }\xi \in \partial \mathcal{T}_{u_+}. \end{cases}
\]
\item For any $v\in \mathcal{T}$, we have
\[
P_{\gamma,\xi}(v) = \lim_{u\to \xi} \frac{G(u,v;\gamma)}{G(o,u;\gamma)} \, .
\]
More precisely, if $v\in \mathcal{T}$ and $w=v \wedge \xi$, then for any $u \in [w,\xi]$, we have
\[
P_{\gamma,\xi}(v) = \frac{G(u,v;\gamma)}{G(o,u;\gamma)} \, .
\]
\item We have $H P_{\gamma,\xi} = \gamma P_{\gamma,\xi}$.
\end{enumerate}
\end{lem}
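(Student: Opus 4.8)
The plan is to treat (a)--(c) as bookkeeping consequences of the factorization identities collected in Lemma~\ref{lem:zetapot}, and then to deduce the substantive statement (d) from (c) together with the fact that $G(\cdot,z;\gamma)$ solves the resolvent equation. For (a), I would start from the definition $P_{\gamma,\xi}(v)=G(v_r,v_k;\gamma)/G(v_0,v_r;\gamma)$ and expand both Green functions by (\ref{eq:multigreen2}) along the sub-paths $(v_0,\dots,v_r)$ and $(v_r,\dots,v_k)$. This gives $\frac{m^\gamma_{v_r}}{m^\gamma_{v_k}}\cdot\frac{\prod_{j=r}^{k-1}\zeta^\gamma_{v_{j+1}}(v_j)}{\prod_{j=0}^{r-1}\zeta^\gamma_{v_{j+1}}(v_j)}$, and the stray factor $m^\gamma_{v_r}/m^\gamma_{v_k}$ is absorbed by telescoping the first identity in (\ref{eq:mv}), $\zeta^\gamma_{v_{j+1}}(v_j)=\frac{m^\gamma_{v_{j+1}}}{m^\gamma_{v_j}}\zeta^\gamma_{v_j}(v_{j+1})$, which turns the numerator into $\prod_{j=r}^{k-1}\zeta^\gamma_{v_j}(v_{j+1})$; the second expression in (a) is then read off directly from the two halves of (\ref{eq:multigreen4}). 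Part (b) is just a comparison of (a) for $u_+$ against (a) for $u$, driven by one geometric dichotomy: if $\xi\in\partial\mathcal{T}_{u_+}$ then $[o,\xi]$ runs through $u$ and $u_+$, so $u\wedge\xi=u$ and $u_+\wedge\xi=u_+$ and the ratio acquires one denominator factor $\zeta^\gamma_{u_+}(u)$; if $\xi\notin\partial\mathcal{T}_{u_+}$ then $u\wedge\xi=u_+\wedge\xi$ and it acquires one numerator factor $\zeta^\gamma_u(u_+)$.

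For (c), the point is that $G(u,v;\gamma)/G(o,u;\gamma)$ stops depending on $u$ once $u$ lies beyond $w:=v\wedge\xi$. Whenever $|u\wedge\xi|>|w|$ one checks that $v\wedge u=w$, so $w$ lies on both $[o,u]$ and $[v,u]$ and the segment $[w,u]=(w=p_0,\dots,p_s=u)$ is common to both. Iterating (\ref{eq:multigreen3}) to peel this segment off then yields $G(o,u;\gamma)=\big(\prod_{i=0}^{s-1}\zeta^\gamma_{p_i}(p_{i+1})\big)G(o,w;\gamma)$ and $G(v,u;\gamma)=\big(\prod_{i=0}^{s-1}\zeta^\gamma_{p_i}(p_{i+1})\big)G(v,w;\gamma)$ with the \emph{same} product. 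The product cancels, and with symmetry (\ref{eq:greensym}) the ratio equals $G(w,v;\gamma)/G(o,w;\gamma)=P_{\gamma,\xi}(v)$. Since this is an exact equality for every such $u$ (in particular for all $u\in[w,\xi]$), the displayed limit is in fact eventually constant.

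Finally, for (d) I would fix a vertex $u$ and choose $z$ on the chain $[o,\xi]$ with $|z|\ge|u|+2$. Each $x\in\{u\}\cup\mathcal{N}_u$ has $x\wedge\xi$ lying on $[o,\xi]$ at distance $\le|u|+1$ from $o$, so the single point $z$ lies in $[x\wedge\xi,\xi]$ simultaneously for all these $x$; hence (c) gives $P_{\gamma,\xi}(x)=G(z,x;\gamma)/G(o,z;\gamma)$ for $x=u$ and for all of its neighbours at once. Setting $C=G(o,z;\gamma)^{-1}$, this means $(HP_{\gamma,\xi})(u)=C\,(HG(z,\cdot\,;\gamma))(u)$. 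Because $G(\cdot,z;\gamma)=(H-\gamma)^{-1}\delta_z$ satisfies $(H-\gamma)G(\cdot,z;\gamma)=\delta_z$ pointwise, the right-hand side equals $C\big(\gamma G(z,u;\gamma)+\delta_z(u)\big)=\gamma P_{\gamma,\xi}(u)$, the Kronecker term vanishing since $z\neq u$.

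The computations in (a)--(c) are routine once the index conventions are pinned down; the one step that genuinely demands care is the geometric accounting, namely verifying $v\wedge u=w$ in (c) and picking a \emph{single} $z$ in (d) that lies past $x\wedge\xi$ for every neighbour of $u$ simultaneously. This uniform choice is exactly what makes the local identity $P_{\gamma,\xi}=C\,G(z,\cdot\,;\gamma)$ hold on the entire star around $u$, which is what converts the resolvent equation into the eigenfunction equation. I expect this bookkeeping, rather than any analytic difficulty, to be the main obstacle; the case $u=o$ requires no separate treatment, as the same choice of $z$ works there.
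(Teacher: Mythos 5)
Your proof is correct and follows essentially the same route as the paper's: parts (a)--(c) rest on factoring Green functions along arcs into products of $\zeta$'s and cancelling the common segment, and part (d) applies the resolvent identity $(H-\gamma)G(\cdot,z;\gamma)=\delta_z$ (together with \eqref{eq:greensym}) at a single vertex $z$ chosen far enough along $[o,\xi]$ that (c) applies simultaneously on the whole star around $u$. The only cosmetic differences are that you derive (a) from \eqref{eq:multigreen2} plus the telescoping of \eqref{eq:mv} instead of iterating \eqref{eq:multigreen3} from both ends, and you read (b) off from (a) rather than from the definition.
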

\begin{proof}
\begin{enumerate}[\rm (a)]
\item $P_{\gamma,\xi}(v) = \frac{G(v_r,v_k;\gamma)}{G(v_0,v_r;\gamma)} = \frac{\zeta_{v_{k-1}}^{\gamma}(v_k)G(v_r,v_{k-1};\gamma)}{\zeta_{v_1}^{\gamma}(v_0)G(v_1,v_r;\gamma)} = \frac{\prod_{j=r}^{k-1}\zeta_{v_j}^{\gamma}(v_{j+1})G(v_r,v_r;\gamma)}{\prod_{j=0}^{r-1}\zeta_{v_{j+1}}^{\gamma}(v_j) G(v_r,v_r;\gamma)} $, so the claim follows by (\ref{eq:multigreen4}).
\item If $\xi \notin \partial \mathcal{T}_{u_+}$, then $u_+$ is farther than $u$ to $\xi$, so $u_+ \wedge \xi = u \wedge \xi$ and $P_{\gamma,\xi}(u_+) = \frac{G(u_+\wedge\xi,u_+;\gamma)}{G(o,u_+\wedge\xi;\gamma)} = \frac{\zeta_u^{\gamma}(u_+)G(u\wedge\xi,u;\gamma)}{G(o,u\wedge\xi;\gamma)} = \zeta_u^{\gamma}(u_+)P_{\gamma,\xi}(u)$. If $\xi \in \partial \mathcal{T}_{u_+}$, then $u_+\wedge \xi = u_+$ and $u\wedge \xi = u$. Thus, $P_{\gamma,\xi}(u_+) = \frac{G(u_+,u_+;\gamma)}{G(o,u_+;\gamma)} = \frac{G(u,u;\gamma)}{\zeta_u^{\gamma}(u_+)G(o,u;\gamma)} \cdot \frac{\zeta_u^{\gamma}(u_+)}{\zeta_{u_+}^{\gamma}(u)} = \frac{1}{\zeta_{u_+}^{\gamma}(u)}P_{\gamma,\xi}(u)$.
\item Let $u \in [w,\xi]$ and let $(u_0,\dots,u_k)$ be an arc with $u_0=w$ and $u_k=u$. Then we have $G(o,u;\gamma) = G(o,u_k;\gamma) = \zeta_{u_{k-1}}^{\gamma}(u_k) G(o,u_{k-1};\gamma) = \prod_{j=0}^{k-1} \zeta_{u_j}^{\gamma}(u_{j+1}) G(o,u_0;\gamma) = \prod_{j=0}^{k-1} \zeta_{u_j}^{\gamma}(u_{j+1}) G(o,w;\gamma)$. On the inverted path $(v_0,\dots,v_k)$ with $v_0=u$ and $v_k=w$, we have $G(u,v;\gamma) = G(v_0,v;\gamma) = \zeta_{v_1}^{\gamma}(v_0)G(v_1,v;\gamma) = \prod_{j=0}^{k-1}\zeta_{v_{j+1}}^{\gamma}(v_j) G(v_k,v;\gamma) = \prod_{j=0}^{k-1} \zeta_{u_j}^{\gamma}(u_{j+1}) G(w,v;\gamma)$. Thus, $\frac{G(u,v;\gamma)}{G(o,u;\gamma)} = \frac{G(w,v;\gamma)}{G(o,w;\gamma)} = P_{\gamma,\xi}(v)$.
\item Let $f_v^{\gamma}(w) = G(v,w;\gamma)$. We first show that $H f_v^{\gamma} = \delta_v + \gamma f^{\gamma}_v$. Indeed, using (\ref{eq:greensym}),
\begin{align}\label{e:feigen}
(Hf_v^{\gamma})(w) & = \sum_u H(w,u)f_v^{\gamma}(u) = \sum_u H(w,u)G(v,u;\gamma) = \sum_u H(w,u) G(u,v;\gamma) \nonumber\\
& = [H(H-\gamma)^{-1}](w,v) = \langle \delta_w, H(H-\gamma)^{-1}\delta_v\rangle \nonumber \\
& = \langle \delta_w,\delta_v\rangle + \gamma \langle \delta_w, (H-\gamma)^{-1}\delta_v\rangle = \delta_v(w) + \gamma f^{\gamma}_v(w)
\end{align}
as asserted. Now let $v\in\mathcal{T}$ and $\xi\in\partial\mathcal{T}$, say $\xi=(o,s_1,s_2,\dots)$. Let $n>|v|+1$. Then by (c), for any $w\in\{v\}\cup \mathcal{N}_v$, we have $P_{\gamma,\xi}(w) = P_{\gamma,s_n}(w) = \frac{G(s_n,w;\gamma)}{G(o,s_n;\gamma)} = \frac{f_{s_n}^{\gamma}(w)}{f_o^{\gamma}(s_n)}$. Hence, $(HP_{\gamma,\xi})(v) = \sum_{w\sim v}\frac{f^{\gamma}_{s_n}(w)}{f^{\gamma}_o(s_n)} + V(v) \frac{f^{\gamma}_{s_n}(v)}{f_o^{\gamma}(s_n)} = \frac{(Hf^{\gamma}_{s_n})(v)}{f^{\gamma}_o(s_n)} = \frac{\delta_{s_n}(v)+\gamma f_{s_n}^{\gamma}(v)}{f_o^{\gamma}(s_n)} = \gamma P_{\gamma,\xi}(v)$.      \qedhere
\end{enumerate}
\end{proof}

\begin{rem}
One could define the Poisson kernel \eqref{e:poisson} alternatively as follows. Given $\gamma\in \C^+$, let $K_{\gamma,v}(u)=\frac{G(u,v;\gamma)}{G(o,u;\gamma)}$. Then $\mathscr{F}=(K_{\gamma,v})_{v\in\mathcal{T}}$ is a family of bounded functions on $\mathcal{T}$. Item (c) in Lemma~\ref{lem:poissonexpan} says that this family extends continuously to $\partial \mathcal{T}$ (in the sense \eqref{e:conv}) via the formula $K_{\gamma,v}(\xi)=K_{\gamma,v}(v\wedge \xi)$. One then defines $P_{\gamma,\xi}(v) := K_{\gamma,v}(\xi)$. Note that the family $\mathscr{F}$ separates the points of $\partial \mathcal{T}$~: if $\xi\neq \xi'\in \partial\mathcal{T}$, let $w=\xi \wedge \xi'$ and let $v\in \mathcal{N}_w \cap [w,\xi]$. Then $K_{\gamma,v}(\xi) = K_{\gamma,v}(v)$ and $K_{\gamma,v}(\xi') = K_{\gamma,v}(w)$. By \eqref{eq:multigreen3}, we have $G(w,v;\gamma)=\zeta_v^{\gamma}(w)G(v,v;\gamma)$ and $G(o,w;\gamma) = \frac{G(o,v;\gamma)}{\zeta_w^{\gamma}(v)}$, so $K_{\gamma,v}(w)=\frac{G(w,v;\gamma)}{G(o,w;\gamma)}=\zeta_v^{\gamma}(w)\zeta_w^{\gamma}(v)K_{\gamma,v}(v)$. Moreover, $\zeta_v^{\gamma}(w)\zeta_w^{\gamma}(v)\neq 1$ since $G(v,w;\gamma) = \frac{-\zeta_w^{\gamma}(v)}{2m_w^{\gamma}} = \frac{-\zeta_w^{\gamma}(v)}{\frac{1}{\zeta_v^{\gamma}(w)}-\zeta_w^{\gamma}(v)}=\frac{-\zeta_w^{\gamma}(v)\zeta_v^{\gamma}(w)}{1-\zeta_v^{\gamma}(w)\zeta_w^{\gamma}(v)}$ and $|G(v,w;\gamma)|\le \frac{1}{\Im \gamma}<\infty$. Hence, $K_{\gamma,v}(\xi)\neq K_{\gamma,v}(\xi')$. It follows that the geometric compactification $\mathcal{T}\cup \partial \mathcal{T}$ coincides with the compactification $\widehat{\mathcal{T}}_{\mathscr{F}}$ induced by $\mathscr{F}$, see e.g. \cite[Theorem 7.13]{Woess}. The previous argument is very similar to the one in \cite[Chapter 9.C]{Woess}, which shows that the Martin compactification of a transient nearest-neighbor random walk on $\mathcal{T}$ coincides with $\mathcal{T}\cup \partial \mathcal{T}$.
\end{rem}

Item (d) in Lemma~\ref{lem:poissonexpan} shows that the Poisson kernel is a ``generalized eigenfunction''. The following theorem shows that any generalized eigenfunction with eigenvalue $\gamma\in\IC^+$ can actually be expanded in Poisson kernels. Let $\mathcal{M}$ be the algebra generated by the sets $(\partial \mathcal{T}_v)_{v\in \mathcal{T}}$.

\begin{thm}                \label{thm:gef}
Let $\gamma \in \C^+$ and $f:\mathcal{T}\to \C$.
\begin{enumerate}[\rm (i)]
\item If $f(v) = \int_{\partial \mathcal{T}} P_{\gamma,\xi}(v)\,\dd \nu(\xi)$ for some finitely additive measure $\nu$ on $\mathcal{M}$, then $Hf = \gamma f$. Moreover, we must have
\begin{equation}         \label{eq:nudef}
\nu(\partial \mathcal{T}) = f(o) \quad \text{and} \quad \nu(\partial \mathcal{T}_{u_+}) = - G(o,u;\gamma)\left\{f(u_+) - \zeta_u^{\gamma}(u_+) f(u)\right\}
\end{equation}
for any $u\in \mathcal{T}$ and $u_+\in \mathcal{N}_u^+$.
\item Conversely, if $Hf=\gamma f$, the assignment \emph{(\ref{eq:nudef})} defines a finitely additive measure $\nu$ on $\mathcal{M}$ such that $f(v) = \int_{\partial \mathcal{T}} P_{\gamma,\xi}(v)\,\dd \nu(\xi)$.
\end{enumerate}
\end{thm}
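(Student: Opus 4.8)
The plan is to prove both implications using the structural facts in Lemma~\ref{lem:poissonexpan}, the cylinder decomposition of the boundary, and the eigenvalue equation read off vertex by vertex. Throughout I write $u_-$ for the parent of a vertex $u\neq o$ (so that $u\in\mathcal{N}_{u_-}^+$), and I use repeatedly the partition $\partial\mathcal{T}_u=\bigsqcup_{u_+\in\mathcal{N}_u^+}\partial\mathcal{T}_{u_+}$, with the convention that for $u=o$ this reads $\partial\mathcal{T}=\bigsqcup_{u_+\in\mathcal{N}_o}\partial\mathcal{T}_{u_+}$.

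For part (i), since $H$ is a nearest-neighbour operator on a tree of uniformly bounded degree, $(Hf)(v)$ is a finite sum, so I may interchange $H$ with $\int_{\partial\mathcal{T}}\cdot\,\dd\nu$ and obtain $Hf=\gamma f$ directly from $HP_{\gamma,\xi}=\gamma P_{\gamma,\xi}$ (Lemma~\ref{lem:poissonexpan}(d)). The normalization $\nu(\partial\mathcal{T})=f(o)$ is immediate because $o\wedge\xi=o$ forces $P_{\gamma,\xi}(o)=1$. To get the formula for $\nu(\partial\mathcal{T}_{u_+})$ I compute $f(u_+)-\zeta_u^\gamma(u_+)f(u)$ by inserting the two-case formula of Lemma~\ref{lem:poissonexpan}(b) and splitting the integral over $\partial\mathcal{T}_{u_+}$ and its complement; the contributions off $\partial\mathcal{T}_{u_+}$ cancel, leaving $f(u_+)-\zeta_u^\gamma(u_+)f(u)=\int_{\partial\mathcal{T}_{u_+}}\bigl(\tfrac{1}{\zeta_{u_+}^\gamma(u)}-\zeta_u^\gamma(u_+)\bigr)P_{\gamma,\xi}(u)\,\dd\nu=2m_u^\gamma\int_{\partial\mathcal{T}_{u_+}}P_{\gamma,\xi}(u)\,\dd\nu$ by \eqref{eq:mv}. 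Since $u\wedge\xi=u$ for $\xi\in\partial\mathcal{T}_{u_+}$, the integrand is the constant $G(u,u;\gamma)/G(o,u;\gamma)$, and using $G(u,u;\gamma)=-1/(2m_u^\gamma)$ one solves for $\nu(\partial\mathcal{T}_{u_+})$ to land exactly on \eqref{eq:nudef}.

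For part (ii) the first task is to check that \eqref{eq:nudef} really defines a finitely additive $\nu$ on $\mathcal{M}$. Because the degree is bounded each sphere is finite, so $\mathcal{M}$ is the increasing union of the finite algebras generated by the level-$n$ cylinders, and a finitely additive measure is determined by its values on cylinders subject to the single consistency requirement $\nu(\partial\mathcal{T}_u)=\sum_{u_+\in\mathcal{N}_u^+}\nu(\partial\mathcal{T}_{u_+})$ (and the root version $f(o)=\sum_{u_+\in\mathcal{N}_o}\nu(\partial\mathcal{T}_{u_+})$). This is where $Hf=\gamma f$ enters: writing the eigenvalue equation at $u$ as $\sum_{u_+\in\mathcal{N}_u^+}f(u_+)=(\gamma-V(u))f(u)-f(u_-)$ and substituting into $\sum_{u_+}\nu(\partial\mathcal{T}_{u_+})$, the coefficient simplifies via the second identity of \eqref{eq:green3} to $\gamma-V(u)-\sum_{u_+}\zeta_u^\gamma(u_+)=1/\zeta_{u_-}^\gamma(u)$, and then $G(o,u;\gamma)=\zeta_{u_-}^\gamma(u)G(o,u_-;\gamma)$ from \eqref{eq:multigreen3} collapses the sum to $-G(o,u_-;\gamma)\{f(u)-\zeta_{u_-}^\gamma(u)f(u_-)\}=\nu(\partial\mathcal{T}_u)$. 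The root case $u=o$ is handled identically using the first identity of \eqref{eq:green3} and $G(o,o;\gamma)=-1/(2m_o^\gamma)$.

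Once $\nu$ is a genuine finitely additive measure, $\xi\mapsto P_{\gamma,\xi}(v)$ is $\mathcal{M}$-measurable and finitely valued (constant on each set $\{\xi:v\wedge\xi=x_j\}$ along the geodesic $[o,v]$), so $\int_{\partial\mathcal{T}}P_{\gamma,\xi}(v)\,\dd\nu$ is well defined, and I prove $f(v)=\int_{\partial\mathcal{T}}P_{\gamma,\xi}(v)\,\dd\nu$ by induction on $|v|$. The base case $v=o$ is the normalization. For the step, with parent $v_-$ I split the integral using Lemma~\ref{lem:poissonexpan}(b): the part off $\partial\mathcal{T}_v$ recombines with part of the $\partial\mathcal{T}_v$ piece into $\zeta_{v_-}^\gamma(v)\int_{\partial\mathcal{T}}P_{\gamma,\xi}(v_-)\,\dd\nu=\zeta_{v_-}^\gamma(v)f(v_-)$ by the inductive hypothesis, while the remaining part equals $2m_{v_-}^\gamma\int_{\partial\mathcal{T}_v}P_{\gamma,\xi}(v_-)\,\dd\nu$ by \eqref{eq:mv}; evaluating that last integral with $P_{\gamma,\xi}(v_-)=G(v_-,v_-;\gamma)/G(o,v_-;\gamma)$ (constant on $\partial\mathcal{T}_v$) and the definition of $\nu(\partial\mathcal{T}_v)$ gives $\frac{1}{2m_{v_-}^\gamma}\{f(v)-\zeta_{v_-}^\gamma(v)f(v_-)\}$, so the two contributions telescope to $f(v)$. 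The one genuinely load-bearing step is the consistency check for $\nu$: it is the only place the full eigenfunction equation is used and the only place where \eqref{eq:green3} and \eqref{eq:multigreen3} must be combined so that the $\zeta$-factors telescope, so I expect that verification (and the separate treatment of the root $o$, which has no parent) to be the main obstacle, everything else being either a direct appeal to Lemma~\ref{lem:poissonexpan} or the clean induction above.
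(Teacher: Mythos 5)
Your argument is correct and follows essentially the same route as the paper's own proof: the same splitting of the integral over $\partial\mathcal{T}_{u_+}$ and its complement via Lemma~\ref{lem:poissonexpan}(b) together with \eqref{eq:mv} for part (i), and for part (ii) the same consistency check combining the eigenvalue equation at $u$ with \eqref{eq:green3} and \eqref{eq:multigreen3}, followed by the same induction on $|v|$. The only difference is presentational (you make explicit the reduction of finite additivity to the single cylinder-consistency identity and the simple-function nature of $\xi\mapsto P_{\gamma,\xi}(v)$, which the paper leaves implicit), so nothing further is needed.
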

\begin{proof}
\begin{enumerate}[\rm (i)]
\item Suppose $f(v) = \int_{\partial\mathcal{T}} P_{\gamma,\xi}(v)\,\dd \nu(\xi)$. Since for each $\xi$ we have $HP_{\gamma,\xi} = \gamma P_{\gamma,\xi}$, it follows that $Hf = \gamma f$. Indeed, let $v\in\mathcal{T}$, $\xi=(o,s_1,s_2,\dots)$ and $n>|v|+1$. Then $P_{\gamma,\xi}(w) = P_{\gamma,s_n}(w)$ for all $w\in \{v\}\cup \mathcal{N}_v$. Hence,
\begin{multline*}
\gamma f(v) = \int_{\partial \mathcal{T}} (HP_{\gamma,\xi})(v)\,\dd\nu(\xi) = \sum_{|s_n|=n} \Big[\Big(\sum_{w\sim v} P_{\gamma,s_n}(w)\Big) + V(v)P_{\gamma,s_n}(v)\Big]\nu(\partial \mathcal{T}_{s_n}) \\
= \sum_{w\sim v}\sum_{|s_n|=n} P_{\gamma,s_n}(w)\nu(\partial\mathcal{T}_{s_n}) + V(v)\sum_{|s_n|=n}P_{\gamma,s_n}(v)\nu(\partial\mathcal{T}_{s_n}) = (Hf)(v)
\end{multline*}
as asserted. Moreover, $f(o) = \int_{\partial \mathcal{T}} P_{\gamma,\xi}(o)\,\dd \nu(\xi) = \nu(\partial \mathcal{T})$ as claimed.

Given $u\in \mathcal{T}$ and $u_+\in \mathcal{N}_u^+$, we have $P_{\gamma,\xi}(u_+) = \zeta_u^{\gamma}(u_+) P_{\gamma,\xi}(u)$ if $\xi \notin \partial \mathcal{T}_{u_+}$, while $P_{\gamma,\xi}(u_+) = \frac{1}{\zeta_{u_+}^{\gamma}(u)} P_{\gamma,\xi}(u)$ if $\xi \in \partial \mathcal{T}_{u_+}$ by Lemma~\ref{lem:poissonexpan}. Hence,
\begin{align*}
f(u_+) & = \int_{\partial \mathcal{T}} P_{\gamma,\xi}(u_+)\,\dd \nu(\xi) = \int_{\partial \mathcal{T} \setminus \partial \mathcal{T}_{u_+}} P_{\gamma,\xi}(u_+)\,\dd \nu(\xi) + \int_{\partial \mathcal{T}_{u_+}} P_{\gamma,\xi}(u_+)\,\dd \nu(\xi) \\
& = \zeta_u^{\gamma}(u_+)\int_{\partial \mathcal{T} \setminus \partial \mathcal{T}_{u_+}} P_{\gamma,\xi}(u)\,\dd \nu(\xi) + \frac{1}{\zeta_{u_+}^{\gamma}(u)} \int_{\partial \mathcal{T}_{u_+}} P_{\gamma,\xi}(u)\,\dd \nu(\xi) \\
& = \zeta_u^{\gamma}(u_+)\int_{\partial \mathcal{T}} P_{\gamma,\xi}(u)\,\dd \nu(\xi) + \Big(\frac{1}{\zeta_{u_+}^{\gamma}(u)} - \zeta_u^{\gamma}(u_+)\Big)\int_{\partial \mathcal{T}_{u_+}} P_{\gamma,\xi}(u)\,\dd \nu(\xi)
\end{align*}
By assumption, $\int_{\partial \mathcal{T}} P_{\gamma,\xi}(u)\,\dd \nu(\xi)=f(u)$. Also, if $\xi \in \partial \mathcal{T}_{u_+}$, then $u\wedge \xi = u$, so $P_{\gamma,\xi}(u) = \frac{G(u,u;\gamma)}{G(o,u;\gamma)}$. Using (\ref{eq:mv}) we thus get
\begin{align*}
f(u_+) & =  \zeta_u^{\gamma}(u_+) f(u) + 2m_u^{\gamma} \,\frac{G(u,u;\gamma)}{G(o,u;\gamma)} \nu(\partial \mathcal{T}_{u_+}) \, ,
\end{align*}
so $ \nu(\partial \mathcal{T}_{u_+}) = - G(o,u;\gamma)\left\{f(u_+) - \zeta_u^{\gamma}(u_+) f(u)\right\}$ as asserted.
\item Suppose $Hf=\gamma f$. To see that $\nu$ is finitely additive, it suffices to show that for any $u\in \mathcal{T}$, we have $\nu(\partial \mathcal{T}_u) = \sum_{u_+\in \mathcal{N}_u^+} \nu(\partial \mathcal{T}_{u_+})$. For this, given $u\neq o$, let $u_-$ be the unique neighbor of $u$ with $|u_-|=|u|-1$. Then using (\ref{eq:green3}) and (\ref{eq:multigreen3}), we have
\begin{align*}
& \sum_{u_+\in \mathcal{N}_u^+} \nu(\partial \mathcal{T}_{u_+}) = -G(o,u;\gamma)\Big(\sum_{u_+\in \mathcal{N}_u^+} f(u_+) - f(u)\sum_{u_+\in \mathcal{N}_u^+} \zeta_u^{\gamma}(u_+)\Big) \\
& \qquad = -G(o,u;\gamma)\Big([(Hf)(u) - f(u_-) - V(u)f(u)] - f(u)\big[\gamma-V(u)-\frac{1}{\zeta_{u_-}^{\gamma}(u)}\big]\Big) \\
& \qquad = -\frac{G(o,u;\gamma)}{\zeta_{u_-}^{\gamma}(u)} \big( f(u) - \zeta_{u_-}^{\gamma}(u)f(u_-)\big) = \nu(\partial \mathcal{T}_u) \, .
\end{align*}
The case $u=o$ is similar. This proves finite additivity.

We next prove that $f(v) = \int_{\partial \mathcal{T}} P_{\gamma,\xi}(v)\,\dd \nu(\xi)$ using induction on $|v|$.

For $v=o$, we have $\int_{\partial \mathcal{T}} P_{\gamma,\xi}(o)\,\dd \nu(\xi) = \nu(\partial \mathcal{T}) = f(o)$ as asserted.

Suppose the relation is true for all vertices $u$ with $|u|=n$. Let $|u_+|=n+1$, say $u_+\in \mathcal{N}_u^+$ for some $u$ with $|u|=n$. Then using Lemma~\ref{lem:poissonexpan}, we have
\begin{align*}
& \int_{\partial \mathcal{T}} P_{\gamma,\xi}(u_+)\,\dd \nu(\xi) = \int_{\partial \mathcal{T} \setminus \partial \mathcal{T}_{u_+}} P_{\gamma,\xi}(u_+)\,\dd \nu(\xi) + \int_{\partial \mathcal{T}_{u_+}} P_{\gamma,\xi}(u_+)\,\dd \nu(\xi) \\
& \qquad = \zeta_u^{\gamma}(u_+) \int_{\partial \mathcal{T} \setminus \partial \mathcal{T}_{u_+}} P_{\gamma,\xi}(u)\,\dd \nu(\xi) + \frac{1}{\zeta_u^{\gamma}(u_+)} \int_{\partial \mathcal{T}_{u_+}} P_{\gamma,\xi}(u)\,\dd \nu(\xi) \\
& \qquad = \zeta_u^{\gamma}(u_+)\int_{\partial \mathcal{T}} P_{\gamma,\xi}(u)\,\dd \nu(\xi) +\Big(\frac{1}{\zeta_{u_+}^{\gamma}(u)} - \zeta_u^{\gamma}(u_+)\Big)\int_{\partial \mathcal{T}_{u_+}} P_{\gamma,\xi}(u)\,\dd \nu(\xi) \, .
\end{align*}
By the induction hypothesis, $\int_{\partial \mathcal{T}} P_{\gamma,\xi}(u)\,\dd \nu(\xi) = f(u)$. Also, if $\xi \in \partial \mathcal{T}_{u_+}$, then $u\wedge \xi=u$, so $P_{\gamma,\xi}(u) = \frac{G(u,u;\gamma)}{G(o,u;\gamma)}$. Using (\ref{eq:mv}) and (\ref{eq:nudef}), we thus get
\[
\int_{\partial \mathcal{T}} P_{\gamma,\xi}(u_+)\,\dd \nu(\xi) = \zeta_u^{\gamma}(u_+)f(u) + 2m_u^{\gamma}  \frac{G(u,u;\gamma)}{G(o,u;\gamma)} \nu(\partial \mathcal{T}_{u_+}) = f(u_+) \, .
\]
This completes the proof of (ii). \qedhere
\end{enumerate}
\end{proof}

Our target now is to extend the previous results to $\gamma=E+i0$. We start with the following lemma.

\begin{lem}              \phantomsection             \label{lem:limits}
There is a Lebesgue-null set $\mathfrak{A} \subset \R$ such that for any $E \in \mathfrak{S} := \R \setminus \mathfrak{A}$ and any $v \in \mathcal{T}$, $w\sim v$, the limits
\[
G(v,v;E +i0) := \lim_{\eta \downarrow 0} G(v,v;E+i\eta) \quad \text{and} \quad \zeta_w^{E+i0}(v):= \lim_{\eta \downarrow 0} \zeta_w^{E+i\eta}(v)
\]
exist, are finite and are non-zero.
\end{lem}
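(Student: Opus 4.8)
The plan is to recognise each diagonal Green function as a Herglotz (Nevanlinna) function of the spectral parameter and then invoke the classical boundary-value theory, supplemented by a reciprocal trick to secure non-vanishing. For a fixed $v\in\mathcal{T}$ the map $\gamma\mapsto G(v,v;\gamma)=\langle \delta_v,(H-\gamma)^{-1}\delta_v\rangle=\int_{\R}\frac{\dd\mu_v(t)}{t-\gamma}$, with $\mu_v$ the spectral measure of $\delta_v$, sends $\C^+$ into $\C^+$ (since $\Im\frac{1}{t-\gamma}>0$ there), hence is Herglotz. By the classical theorem on boundary values of Herglotz functions, the vertical limit $G(v,v;E+i0)=\lim_{\eta\downarrow0}G(v,v;E+i\eta)$ exists and is finite for Lebesgue-a.e. $E$. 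The same applies verbatim to the truncated operator $H^{(v|w)}$, which is self-adjoint on $\ell^2(\mathcal{T}^{(v|w)})$, so $\gamma\mapsto G^{(v|w)}(v,v;\gamma)$ is Herglotz with finite boundary limits a.e.; since $\zeta_w^{\gamma}(v)=-G^{(v|w)}(v,v;\gamma)$, the boundary limit of $\zeta$ exists and is finite a.e. as well.

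The only point beyond the standard Fatou statement is non-vanishing, which I expect to be the main (though still elementary) obstacle. Here I would use the fact that if $F$ is Herglotz then so is $-1/F$, because $\Im(-1/F)=\Im F/|F|^2>0$ on $\C^+$, together with the trivial identity $F\cdot(-1/F)\equiv-1$. Applying the boundary theory to both $F=G(v,v;\cdot)$ and $-1/F=2m_v^{\gamma}$ (recall $G(v,v;\gamma)=\frac{-1}{2m_v^{\gamma}}$), on the common full-measure set where both limits exist and are finite one gets $G(v,v;E+i0)\cdot 2m_v^{E+i0}=-1$, which forces $G(v,v;E+i0)\neq0$. The identical argument with $F=G^{(v|w)}(v,v;\cdot)$ shows that $G^{(v|w)}(v,v;E+i0)$ is finite and non-zero a.e., hence so is $\zeta_w^{E+i0}(v)$.

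Finally, since $\mathcal{T}$ is countable there are only countably many vertices $v$ and countably many ordered neighbour pairs $(v,w)$, so I would let $\mathfrak{A}$ be the union of all the exceptional null sets produced above. As a countable union of Lebesgue-null sets, $\mathfrak{A}$ is Lebesgue-null, and for every $E\in\mathfrak{S}=\R\setminus\mathfrak{A}$ and every $v\in\mathcal{T}$, $w\sim v$, the limits $G(v,v;E+i0)$ and $\zeta_w^{E+i0}(v)$ exist, are finite and are non-zero. No genuine difficulty remains once the non-vanishing is handled by the reciprocal-Herglotz observation, which avoids any direct analysis of the measures $\mu_v$.
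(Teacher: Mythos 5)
Your proposal is correct, and it diverges from the paper only in the non-vanishing step, which is indeed the one nontrivial point. For existence and finiteness you do exactly what the paper does: identify $G(v,v;\cdot)$ and $-\zeta_w^{\cdot}(v)=G^{(v|w)}(v,v;\cdot)$ as Borel transforms of the spectral measures $\mu_v$ and $\mu_v^{(v|w)}$, invoke the a.e.\ boundary-value theorem for Herglotz functions, and take a countable union of null sets over vertices and ordered neighbour pairs. For non-vanishing, the paper instead uses the tree recursions \eqref{eq:green1}--\eqref{eq:green2}: writing $G(v,v;\gamma)=\bigl(V(v)-\gamma+\sum_{u\sim v}\zeta_v^{\gamma}(u)\bigr)^{-1}$ and noting that the denominator has a finite limit on $\mathfrak{S}$ (after enlarging $\mathfrak{A}$ to control all the $\zeta_v^{\gamma}(u)$), it concludes that the limit of $G(v,v;\cdot)$, being the reciprocal of a finite quantity, cannot vanish; the same identity for $\zeta_w^{\gamma}(v)$ handles the second family. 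Your reciprocal-Herglotz argument --- that $-1/F$ is again Herglotz, so both $F$ and $-1/F$ have finite boundary values off a null set, and $F(E+i0)\cdot\bigl(-1/F(E+i0)\bigr)=-1$ forces $F(E+i0)\neq 0$ --- is equally valid and is actually more robust: it uses nothing about the tree structure or the recursion identities, only the Nevanlinna class of the diagonal Green functions, whereas the paper's route recycles identities it has already established. The one small point worth making explicit in your version is that $-1/F$ need not be the Borel transform of a \emph{finite} measure, so you are appealing to the boundary-value theorem for general Herglotz functions (via the full Nevanlinna representation), not merely for Borel transforms; this is standard but is a slightly stronger input than the paper needs.
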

\begin{proof}
Let $\mu_v(J) = \langle \delta_v,\chi_J(H)\delta_v\rangle$ and $\mu_v^{(v|w)}(J) = \langle \delta_v,\chi_J(H^{(v|w)})\delta_v\rangle$ for Borel $J \subseteq \R$. Since $G(v,v;\gamma)$ and $\zeta_w^{\gamma}(v)$ are the Borel transforms of $\mu_v$ and $\mu^{(v|w)}_v$, respectively, we know the limits exist and are finite; see e.g. \cite[Theorem 1.4]{CRM}. Let $A^v$ and $A^{(v|w)}$ be the Lebesgue-null sets outside which the limits are finite. Put $\mathscr{A}= \cup_{v \in \mathcal{T}} A^v$, $\mathscr{A}' = \cup_{v \in \mathcal{T}} \cup_{w \sim v} \mathscr{A}^{(v|w)}$ and $\mathfrak{A} = \mathscr{A} \cup \mathscr{A}'$. Then $\mathfrak{A}$ is Lebesgue-null.

Let $E \notin \mathfrak{A}$. Since $G(v,v;\gamma) = \frac{1}{V(v) - \gamma + \sum_{u \sim v} \zeta_v^{\gamma}(u)}$, and since the limit of the denominator is finite as $\eta \downarrow 0$, we get $G(v,v;E+i0) \neq 0$. Similarly, we deduce from the identity $\zeta_w^{\gamma}(v) = \frac{1}{V(v) - \gamma + \sum_{u \in \mathcal{N}_v \setminus \{w\}} \zeta_v^{\gamma}(u)}$ that $\zeta_w^{E+i0}(v) \neq 0$.
\end{proof}
This directly implies the following proposition~: 
\begin{prp}\label{p:exists}
Let $E\in \mathfrak{S}$. Then for any $v\in \mathcal{T}$ and $\xi \in \partial \mathcal{T}$, the limit
\[
P_{E,\xi}(v) := \lim_{\eta \downarrow 0} P_{E+i\eta,\xi}(v)
\]
exists.
\end{prp}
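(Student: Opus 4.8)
The plan is to reduce the limit of the Poisson kernel to the limits of the boundary quantities $\zeta_w^\gamma(v)$, which are already controlled by Lemma~\ref{lem:limits}. The key observation is that, for fixed $v$ and $\xi$, the combinatorial data entering the definition of $P_{\gamma,\xi}(v)$ do not depend on $\gamma$, so the Poisson kernel is a fixed finite product of $\zeta$-factors, to which we may pass to the limit factor by factor.

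First I would fix $E\in\mathfrak{S}$, $v\in\mathcal{T}$ and $\xi\in\partial\mathcal{T}$, and set $w=v\wedge\xi$, which lies on the arc $[o,v]$ by definition of $v\wedge\xi$. Writing $(v_0,\dots,v_k)$ for this arc, with $v_0=o$, $v_k=v$ and $v_r=w$, I invoke Lemma~\ref{lem:poissonexpan}(a) to express, for every $\eta>0$,
\[
P_{E+i\eta,\xi}(v) = \frac{\prod_{j=r}^{k-1} \zeta_{v_j}^{E+i\eta}(v_{j+1})}{\prod_{j=0}^{r-1}\zeta_{v_{j+1}}^{E+i\eta}(v_j)} \, .
\]
The integers $k$ and $r$ and the vertices $v_j$ are determined by $v$ and $\xi$ alone, independently of $\eta$, so this is a fixed finite quotient of products of $\zeta$-factors, each of the form $\zeta_a^{E+i\eta}(b)$ with $a\sim b$.

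Then I would pass to the limit $\eta\downarrow 0$ factor by factor. By Lemma~\ref{lem:limits}, every factor $\zeta_{v_j}^{E+i\eta}(v_{j+1})$ converges to a finite limit $\zeta_{v_j}^{E+i0}(v_{j+1})$ as $\eta\downarrow 0$, and likewise for the denominator factors. Since there are finitely many factors, the numerator product converges; moreover each denominator factor $\zeta_{v_{j+1}}^{E+i0}(v_j)$ is non-zero by Lemma~\ref{lem:limits}, so the denominator product converges to a non-zero limit. Hence the quotient converges, and in fact
\[
P_{E,\xi}(v) = \frac{\prod_{j=r}^{k-1} \zeta_{v_j}^{E+i0}(v_{j+1})}{\prod_{j=0}^{r-1}\zeta_{v_{j+1}}^{E+i0}(v_j)} \, .
\]

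There is essentially no obstacle beyond recording that the representation is $\eta$-independent; all the analytic content is packaged into Lemma~\ref{lem:limits}. The only point deserving a moment's care is that the denominator stays nonvanishing in the limit, which is precisely why Lemma~\ref{lem:limits} was stated with the non-zero conclusion on $\zeta_w^{E+i0}(v)$ rather than mere finiteness.
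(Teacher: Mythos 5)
Your proof is correct and follows exactly the paper's own argument: express $P_{\gamma,\xi}(v)$ via Lemma~\ref{lem:poissonexpan}(a) as a fixed, $\eta$-independent finite quotient of $\zeta$-factors and pass to the limit using Lemma~\ref{lem:limits}. The paper states this in one line; your only addition is to spell out why the denominator's nonvanishing matters, which is a worthwhile remark but not a different method.
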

\begin{proof}
By Lemma~\ref{lem:poissonexpan}, $P_{\gamma,\xi}(v) = \frac{\prod_{j=r}^{k-1} \zeta_{v_j}^{\gamma}(v_{j+1})}{\prod_{j=0}^{r-1}\zeta_{v_{j+1}}^{\gamma}(v_j)} $, so the claim follows from  Lemma~\ref{lem:limits}.
\end{proof}

Let $v,w\in\mathcal{T}$ and $\gamma\in\C^+$. Recall the notation
\[
\Psi_{\gamma,v}(w) = \frac{1}{\pi} \Im G(v,w;\gamma)
\]
introduced in Lemma~\ref{lem:zetapot}. The following lemma shows that in the regions where the operator $H$ has AC spectrum, we may expand the kernel of $H$ in terms of the explicit generalized eigenfunctions $\Psi_{E,v}$. In other words, \eqref{eq:decompo2} holds with $Q_{E,w}(v)=\Psi_{E,v}(w)$ and $\dd \rho_H(E) =\dd E$. This will later be combined with the previous lemmas on the Poisson kernel to prove the main result.

\begin{lem}\label{lem:1stexp}
Denote $\Psi_{E,v} := \lim_{\eta \downarrow 0} \Psi_{E+i\eta,v}$ when the limit exists.
\begin{enumerate}[\rm (i)]
\item If $H$ has purely absolutely continuous spectrum in $I\subset \R$, then for any bounded Borel $F:I\to \C$, and for any $v,w\in\mathcal{T}$, we have $F(H)(v,w) = \int_I F(E) \Psi_{E,v}(w)\,\dd E$.
\item For any $E\in \mathfrak{S}$, we have $H \Psi_{E,v} = E \Psi_{E,v}$.
\end{enumerate}
\end{lem}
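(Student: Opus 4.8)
The plan is to treat the two assertions separately: (ii) is a short computation with the resolvent identity, while (i) rests on the boundary-value theory of Borel transforms. Throughout I view $H$ as acting formally on all of $\C^{\mathcal{T}}$, which is legitimate since $\mathcal{T}$ has bounded degree, so that $(H\psi)(w)=\sum_{u\sim w}\psi(u)+V(w)\psi(w)$ is a finite sum at each vertex.

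For part (ii), fix $v$ and set $f_v^{\gamma}(w)=G(v,w;\gamma)$. Self-adjointness of $H$ together with the symmetry \eqref{eq:greensym} gives $\overline{G(v,w;\gamma)}=G(v,w;\overline{\gamma})$, so that
\[
\Psi_{\gamma,v}(w)=\frac{1}{\pi}\Im G(v,w;\gamma)=\frac{1}{2\pi i}\bigl(f_v^{\gamma}(w)-f_v^{\overline{\gamma}}(w)\bigr).
\]
Applying $H$ and using the identity $Hf_v^{\gamma}=\delta_v+\gamma f_v^{\gamma}$ from \eqref{e:feigen} (and its conjugate $Hf_v^{\overline{\gamma}}=\delta_v+\overline{\gamma}f_v^{\overline{\gamma}}$), the two copies of $\delta_v$ cancel, and writing $\gamma=E+i\eta$ one is left with
\[
H\Psi_{\gamma,v}(w)=E\,\Psi_{\gamma,v}(w)+\frac{\eta}{\pi}\Re G(v,w;\gamma).
\]
For $E\in\mathfrak{S}$, Lemma~\ref{lem:limits} (via the product formula \eqref{eq:multigreen2}) guarantees that $G(v,w;E+i\eta)$ has a finite limit as $\eta\downarrow 0$, so the spurious term $\frac{\eta}{\pi}\Re G(v,w;\gamma)$ vanishes in the limit. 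Since the left-hand side is a finite sum over the neighbours of $w$, it converges to $(H\Psi_{E,v})(w)$, and we conclude $H\Psi_{E,v}=E\Psi_{E,v}$.

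For part (i), I would start from the spectral theorem. Let $\mu_{v,w}(J)=\langle\delta_v,\chi_J(H)\delta_w\rangle$ be the (complex) spectral measure, so that $F(H)(v,w)=\int_I F\,\dd\mu_{v,w}$ (interpreting $F$ as extended by zero off $I$, equivalently $F(H):=\chi_I(H)F(H)$), and $G(v,w;\gamma)=\int_\R\frac{\dd\mu_{v,w}(t)}{t-\gamma}$ is the Borel transform of $\mu_{v,w}$. The analytic input is the de la Vall\'ee Poussin/Fatou boundary-value theorem, in the form already invoked as \cite[Theorem 1.4]{CRM}: for Lebesgue-a.e.\ $E$ the limit $\frac{1}{\pi}\Im G(v,w;E+i0)=\Psi_{E,v}(w)$ equals the Radon--Nikodym derivative of the absolutely continuous part of $\mu_{v,w}$. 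The hypothesis that $H$ is purely absolutely continuous in $I$ removes the singular part: by polarization $\mu_{v,w}$ is a finite linear combination of the positive diagonal measures $\mu_\psi(J)=\langle\psi,\chi_J(H)\psi\rangle$, each of which is purely absolutely continuous on $I$ by assumption, whence $\mu_{v,w}|_I$ is absolutely continuous with density $\Psi_{E,v}(w)$. Substituting $\dd\mu_{v,w}|_I=\Psi_{E,v}(w)\,\dd E$ into $F(H)(v,w)=\int_I F\,\dd\mu_{v,w}$ yields the claimed formula.

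The main obstacle is the analytic heart of part (i): invoking the boundary-value theorem for the Borel transform of a \emph{complex} measure and matching the pointwise a.e.\ boundary value $\Psi_{E,v}(w)$ with the absolutely continuous density. The cleanest way to finesse the off-diagonal complication is precisely the polarization reduction above, which applies the theorem only to the positive measures $\mu_\psi$ and reassembles $\mu_{v,w}$ afterwards; one should also double-check that ``purely absolutely continuous spectrum in $I$'' is used exactly in the form ``$\mu_\psi|_I$ is absolutely continuous for all $\psi$''. By contrast, part (ii) is elementary, its only delicate point being the vanishing of $\frac{\eta}{\pi}\Re G$, which is immediate once the finiteness of the boundary values from Lemma~\ref{lem:limits} is in hand.
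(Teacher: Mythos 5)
Your proposal is correct and follows essentially the same route as the paper: part (i) via the spectral theorem, polarization to reduce $\mu_{v,w}$ to positive diagonal spectral measures, and the boundary-value theory of their Borel transforms; part (ii) via the identity $Hf_v^{\gamma}=\delta_v+\gamma f_v^{\gamma}$ and the vanishing of $\frac{\eta}{\pi}\Re G(v,w;E+i\eta)$ for $E\in\mathfrak{S}$. The only cosmetic differences are that the paper carries out the polarization explicitly with the real vectors $\delta_v\pm\delta_w$ (justifying $\mu_{v,w}=\mu_{w,v}$ via \eqref{eq:greensym}) and cites a slightly different form of the Fatou-type boundary-value theorem, while you justify $\eta\,\Re G\to 0$ directly from Lemma~\ref{lem:limits} and \eqref{eq:multigreen2}.
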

\begin{proof}
\begin{enumerate}[\rm (i)]
\item Denote $\mu_{\phi,\psi}(J) = \langle \phi,\chi_J(H)\psi\rangle$ for Borel $J\subseteq \R$, $\mu_{\phi}=\mu_{\phi,\phi}$ and $\mu_{v,w}=\mu_{\delta_v,\delta_w}$. Since the spectrum is purely absolutely continuous in $I$, all measures $\mu_{\phi,\psi}$ are absolutely continuous in w.r.t. the Lebesgue measure in $I$.

By the spectral theorem $F(H)(v,w) = \int_I F(E)\,\dd \mu_{v,w}(E)$. We now show that $\dd \mu_{v,w}(E) = \Psi_{E,v}(w)\,\dd E$ in $I$.

Since $\mu_{\phi}$ is a finite positive measure which is absolutely continuous on $I$, we have by \cite[Theorem 1.6]{CRM} that $\mu_{\phi}(J) = \pi^{-1}\int_J \Im \langle \phi, (H-E-i0)^{-1}\phi\rangle\,\dd E$ for any $J\subseteq I$. Now note that
\begin{equation}\label{:e:nalini}
\langle \delta_v+\delta_w,A(\delta_v+\delta_w)\rangle - \langle \delta_v-\delta_w,A(\delta_v-\delta_w)\rangle = 2 \langle \delta_v,A\delta_w\rangle + 2\langle \delta_w,A\delta_v\rangle\,.
\end{equation}
Taking $A=(H-\gamma)^{-1}$ and using \eqref{eq:greensym}, we get
\begin{equation}\label{e:formulaforgreen}
G(v,w;\gamma) = \frac{\langle \phi,(H-\gamma)^{-1}\phi\rangle - \langle \psi,(H-\gamma)^{-1}\psi\rangle}{4}
\end{equation}
for $\phi=\delta_v+\delta_w$ and $\psi=\delta_v-\delta_w$. If $\gamma=E+i\eta$, taking $\eta\downarrow 0$ on a Lebesgue full set, we get for $J\subseteq I$,
\begin{align*}
\int_J \Psi_{E,v}(w)\,\dd E  & = \frac{1}{4\pi} \int_J \Im \langle \phi,(H-E-i0)^{-1}\phi\rangle\,\dd E \\
& \quad - \frac{1}{4\pi}\int_J \Im \langle \psi,(H-E-i0)^{-1}\psi\rangle\,\dd E \,.
\end{align*}
Applying \eqref{:e:nalini} with $A=\chi_J(H)$, we thus get
\[
\int_J \Psi_{E,v}(w)\,\dd E  = \frac{\mu_{\phi}(J) - \mu_{\psi}(J)}{4} = \frac{\mu_{v,w}(J) + \mu_{w,v}(J)}{2} = \mu_{v,w}(J) \,.
\]
as asserted. Here we used that $\mu_{v,w}=\mu_{w,v}$. This follows e.g. from \eqref{eq:greensym} using the relation $\frac{\mu_{v,w}[a,b]+\mu_{v,w}(a,b)}{2}=\lim_{\eta \downarrow 0} \frac{1}{\pi}\int_a^b \Im G(v,w;E+i\eta)\,\dd E$, which is a consequence of Fubini's theorem (regardless of the continuity of $\mu_{v,w}$).

\item We showed in \eqref{e:feigen} that if $f_v^{\gamma}(w) = G(v,w;\gamma)$, then $H f_v^{\gamma} = \delta_v + \gamma f_v^{\gamma}$. If $\gamma=E+i\eta$ and $\Psi_{\gamma,v}(w) = \frac{1}{\pi}\Im G(v,w;\gamma)$, we thus have $H \Psi_{\gamma,v} = \frac{\eta}{\pi} \Re f_v^{\gamma} + E \Psi_{\gamma,v}$. Assume $E\in \mathfrak{S}$. Then taking $\eta \downarrow 0$, we get using \cite[Theorem 1.6]{CRM} along with \eqref{e:formulaforgreen} that $\eta \Re f_v^{\gamma} \to 0$. Hence, $H\Psi_{E,v} = E\Psi_{E,v}$.\qedhere
\end{enumerate}
\end{proof}

Theorem~\ref{thm:gef} clearly continues to hold if we replace $\gamma\in \C^+$ by $\gamma=E+i0$, $E\in \mathfrak{S}$. So we may apply Theorem~\ref{thm:gef} to the generalized eigenfunction $\Psi_{E,v}$, assuming $E\in \mathfrak{S}$. Our next aim is to refine this expansion.

\begin{lem}                 \label{lem:psiv}
For any $E\in \mathfrak{S}$, we have
\[
\Psi_{E,v}(w) = \int_{\partial \mathcal{T}} \overline{P_{E,\xi}(v)} P_{E,\xi}(w)\,\dd \nu_E(\xi) \, ,
\]
where $\nu_E(\partial \mathcal{T}) = \Psi_{E,o}(o)$, and if $u_+\in \mathcal{N}_u^+$, then
\[
\nu_E(\partial \mathcal{T}_{u_+}) = \frac{1}{\pi} \cdot |G(o,u;E+i0)|^2\cdot |\Im \zeta_u^{E+i0}(u_+)| \, .
\]
\end{lem}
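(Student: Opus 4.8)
The plan is to apply Theorem~\ref{thm:gef} (in its $\gamma=E+i0$ form, valid for $E\in\mathfrak{S}$) to the generalized eigenfunction $w\mapsto\Psi_{E,v}(w)$, which satisfies $H\Psi_{E,v}=E\Psi_{E,v}$ by Lemma~\ref{lem:1stexp}(ii). This produces a finitely additive measure $\nu^{(v)}$ on $\mathcal{M}$ with $\Psi_{E,v}(w)=\int_{\partial\mathcal{T}}P_{E,\xi}(w)\,\dd\nu^{(v)}(\xi)$ and, by (\ref{eq:nudef}),
\[
\nu^{(v)}(\partial\mathcal{T}_{u_+})=-G(o,u;E+i0)\bigl\{\Psi_{E,v}(u_+)-\zeta_u^{E+i0}(u_+)\Psi_{E,v}(u)\bigr\}.
\]
To obtain the asserted formula it then suffices to establish the identity of finitely additive measures $\dd\nu^{(v)}(\xi)=\overline{P_{E,\xi}(v)}\,\dd\nu_E(\xi)$, where $\nu_E$ is the (manifestly nonnegative) set function given in the statement. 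The first task is therefore to verify that $\nu_E$ is a genuine finitely additive measure on $\mathcal{M}$.

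For this I would check $\nu_E(\partial\mathcal{T}_u)=\sum_{u_+\in\mathcal{N}_u^+}\nu_E(\partial\mathcal{T}_{u_+})$, which is what is needed since the $\{\partial\mathcal{T}_w:|w|=n\}$ partition $\partial\mathcal{T}$. Letting $\eta\downarrow0$ in (\ref{eq:sumzeta}) (legitimate by Lemma~\ref{lem:limits}) gives $\sum_{u_+\in\mathcal{N}_u\setminus\{u_-\}}|\Im\zeta_u^{E+i0}(u_+)|=|\Im\zeta_{u_-}^{E+i0}(u)|/|\zeta_{u_-}^{E+i0}(u)|^2$, while (\ref{eq:multigreen3}) gives $G(o,u;E+i0)=\zeta_{u_-}^{E+i0}(u)\,G(o,u_-;E+i0)$, whence $|G(o,u;E+i0)|^2/|\zeta_{u_-}^{E+i0}(u)|^2=|G(o,u_-;E+i0)|^2$. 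Combining the two turns $\sum_{u_+}\nu_E(\partial\mathcal{T}_{u_+})$ into $\frac{1}{\pi}|G(o,u_-;E+i0)|^2|\Im\zeta_{u_-}^{E+i0}(u)|=\nu_E(\partial\mathcal{T}_u)$; the case $u=o$ is analogous and simultaneously confirms $\sum_{|w|=1}\nu_E(\partial\mathcal{T}_w)=\Psi_{E,o}(o)=\nu_E(\partial\mathcal{T})$.

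The heart of the argument is the computation of $\nu^{(v)}(\partial\mathcal{T}_{u_+})$ for vertices with $|u_+|>|v|$. For such $u_+$ the vertex $v$ is not a descendant of $u_+$, so $u$ is the penultimate vertex of the arc $[v,u_+]$, and (\ref{eq:idpsi}) applies to give
\[
\Psi_{E,v}(u_+)-\zeta_u^{E+i0}(u_+)\Psi_{E,v}(u)=\pi^{-1}\,\Im\zeta_u^{E+i0}(u_+)\,\overline{G(v,u;E+i0)}.
\]
Substituting this into the expression for $\nu^{(v)}(\partial\mathcal{T}_{u_+})$, using $|\Im\zeta_u^{E+i0}(u_+)|=-\Im\zeta_u^{E+i0}(u_+)$ (the sign $\Im\zeta\le0$ survives the limit $\eta\downarrow0$) and $|G(o,u;E+i0)|^2/\overline{G(o,u;E+i0)}=G(o,u;E+i0)$, I expect
\[
\nu^{(v)}(\partial\mathcal{T}_{u_+})=\overline{\Bigl(\frac{G(u,v;E+i0)}{G(o,u;E+i0)}\Bigr)}\,\nu_E(\partial\mathcal{T}_{u_+})
\]
to drop out. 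Since $\xi\in\partial\mathcal{T}_{u_+}$ and $|u|\ge|v|\ge|v\wedge\xi|$ force $u\in[v\wedge\xi,\xi]$, Lemma~\ref{lem:poissonexpan}(c) (at $E+i0$) identifies the conjugated bracket as $\overline{P_{E,\xi}(v)}$, which is moreover constant in $\xi$ on $\partial\mathcal{T}_{u_+}$ because $v\wedge\xi=v\wedge u_+$ there. Hence $\nu^{(v)}$ and $\overline{P_{E,\cdot}(v)}\,\dd\nu_E$ agree on every atom of the partition $\{\partial\mathcal{T}_w:|w|=n\}$ with $n>|v|$, hence on all of $\mathcal{M}$ by finite additivity; this yields $\Psi_{E,v}(w)=\int_{\partial\mathcal{T}}\overline{P_{E,\xi}(v)}P_{E,\xi}(w)\,\dd\nu_E(\xi)$, and setting $v=w=o$ (where $P_{E,\xi}(o)=1$) recovers $\nu_E(\partial\mathcal{T})=\Psi_{E,o}(o)$.

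The main obstacle I anticipate is the geometric bookkeeping rather than the algebra: one must argue carefully that for $|u_+|>|v|$ the arc $[v,u_+]$ enters $u_+$ through $u$ (so (\ref{eq:idpsi}) is applied with the correct penultimate vertex) and that $u$ lies on $[v\wedge\xi,\xi]$ (so Lemma~\ref{lem:poissonexpan}(c) gives exactly $P_{E,\xi}(v)=G(u,v;E+i0)/G(o,u;E+i0)$), all while tracking the complex conjugations and the sign of $\Im\zeta_u^{E+i0}(u_+)$. Once these points are pinned down, restricting attention to deep cylinders $|u_+|>|v|$ is justified because the cylindrical function $\overline{P_{E,\cdot}(v)}$ is constant on such sets and the partitions at deep levels generate $\mathcal{M}$.
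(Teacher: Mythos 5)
Your argument is correct, and it reaches the conclusion by a genuinely different route than the paper. The paper first proves the formula for $v=o$ (where it is exactly your computation: Theorem~\ref{thm:gef}(ii) applied to $\Psi_{E,o}$ plus identity \eqref{eq:idpsi} along the arc $[o,u_+]$), and then handles general $v$ by moving the origin to $v$: it introduces the re-based Poisson kernel $P^{(v)}_{E,\xi}$ and measure $\nu_{E,v}$, and proves the two covariance identities $P^{(v)}_{E,\xi}(w)=P_{E,\xi}(w)/P_{E,\xi}(v)$ and $\nu_{E,v}(\partial\mathcal{T}_{s_n})=|P_{E,\xi}(v)|^2\,\nu_E(\partial\mathcal{T}_{s_n})$ by explicit manipulation of the products of $\zeta$'s (equations \eqref{eq:pexiv} and \eqref{eq:raniko}). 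You instead keep the origin fixed, apply Theorem~\ref{thm:gef}(ii) directly to $\Psi_{E,v}$, and observe that for $|u_+|>|v|$ the identity \eqref{eq:idpsi} applies along the arc $[v,u_+]$ (whose penultimate vertex is indeed $u$, since $|u_+|>|v|$ rules out $v$ being a descendant of $u_+$), which together with Lemma~\ref{lem:poissonexpan}(c) and \eqref{eq:greensym} identifies $\nu^{(v)}$ on deep cylinders as $\overline{P_{E,\cdot}(v)}\,\dd\nu_E$; restricting to one partition level $n>\max(|v|,|w|)$ then yields the formula. This is shorter and avoids the change-of-base-point bookkeeping entirely, at the cost of having to verify separately that $\nu_E$ is finitely additive (your check via \eqref{eq:sumzeta} and \eqref{eq:multigreen3} is correct, though you could also get it for free from the $v=o$ case of Theorem~\ref{thm:gef}(ii)); the paper's route is longer but exhibits the covariance of the whole construction under change of origin, which it exploits structurally. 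All the delicate points you flagged (the sign of $\Im\zeta_u^{E+i0}(u_+)$, the location of $u$ on $[v\wedge\xi,\xi]$, the constancy of $P_{E,\cdot}(v)$ on $\partial\mathcal{T}_{u_+}$) are handled correctly.
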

Note that since $\nu_E$ is non-negative, it extends to a countably additive measure on the $\sigma$-algebra generated by the sets $\{\partial \mathcal{T}_v\}$; see e.g. \cite{CCS}.
\begin{proof}
We first assume $v=o$. We know by Theorem~\ref{thm:gef} (ii) that
\begin{equation}          \label{eq:psio}
\Psi_{E,o}(w) = \int_{\partial \mathcal{T}} P_{E,\xi}(w)\,\dd \nu_E(\xi) \, ,
\end{equation}
with $\nu_E(\partial \mathcal{T})=\Psi_{E,o}(o)$ and $\nu_E(\partial \mathcal{T}_{u_+}) = -G(o,u;E+i0) \{ \Psi_{E,o}(u_+) - \zeta_u^{E+i0}(u_+) \Psi_{E,o}(u) \}$. If $(v_0,\dots,v_k)$ is an arc with $v_0=o$ and $v_k=u_+$ (so that $v_{k-1}=u$), then using (\ref{eq:idpsi}), we get $\nu_E(\partial \mathcal{T}_{u_+}) = \frac{-1}{\pi}G(o,u;E+i0) \cdot \Im \zeta_u^{E+i0}(u_+) \cdot \overline{G(o,u;E+i0)}$.

This proves the claim for $v=o$, since $P_{E,\xi}(o)=1$. Now let $v\in \mathcal{T}$. Since $o\in \mathcal{T}$ is arbitrary, by placing the origin at $v$, we get by (\ref{eq:psio}),
\begin{equation}               \label{eq:poissonprel}
\Psi_{E,v}(w) = \int_{\partial \mathcal{T}} P_{E,\xi}^{(v)}(w)\,\dd \nu_{E,v}(\xi) \, .
\end{equation}
Here
\[
P_{E,\xi}^{(v)}(w) = \frac{G(t_0,w;E+i0)}{G(v,t_0;E+i0)} \, ,
\]
where $t_0$ is the vertex of maximal distance from $v$ in $[v,w] \cap [v,\xi]$ and if $u\in \mathcal{T}$, and $u_+\sim u$ has $d(u_+,v)=d(u,v)+1$, then
\[
\nu_{E,v}(\partial \mathcal{T}_{v,u_+}) = \frac{1}{\pi}\cdot |G(v,u;E+i0)|^2 \cdot |\Im \zeta_u^{E+i0}(u_+)| \, .
\]
One can show as before that $P_{E,\xi}^{(v)}(w) = \lim_{t \to \xi} \frac{G(t,w;E+i0)}{G(v,t;E+i0)}$. Using (\ref{eq:greensym}), we have
\[
\frac{G(t,w;E+i0)}{G(v,t;E+i0)} = \frac{G(t,w;E+i0)}{G(o,t;E+i0)} \cdot \frac{G(o,t;E+i0)}{G(t,v;E+i0)} \, .
\]
Taking the limit as $t\to \xi$, we thus get
\begin{equation}            \label{eq:pexiv}
P_{E,\xi}^{(v)}(w)  = \frac{P_{E,\xi}(w)}{P_{E,\xi}(v)} \, .
\end{equation}
Now fix $\xi = (o,s_1,s_2,\dots) \in \partial \mathcal{T}$ and let $v\in \mathcal{T}$. Say $|v|<n$ for some $n$. We will show that
\begin{equation}            \label{eq:raniko}
\nu_{E,v}(\partial \mathcal{T}_{s_n}) = |P_{E,\xi}(v)|^2\cdot \nu_E(\partial \mathcal{T}_{s_n}) \, .
\end{equation}
First note that since $|v|<|s_n|$, then $\partial \mathcal{T}_{s_n} = \partial \mathcal{T}_{v,s_n}$. Indeed, any $\xi' \in \partial \mathcal{T}_{s_n}$ is equivalent to the element of $\partial \mathcal{T}_{v,s_n}$ sharing the infinite intersection $[s_n,\xi']$ and vice versa. Hence,
\begin{equation}            \label{eq:nuvmesure}
\nu_{E,v}(\partial \mathcal{T}_{s_n}) = \frac{1}{\pi}\cdot |G(v,s_{n-1};E+i0)|^2\cdot |\Im \zeta_{s_{n-1}}^{E+i0}(s_n)| \, ,
\end{equation}
because $|v|<n$ implies $d(v,s_n) = d(v,s_{n-1})+1$. Let $s_r = v\wedge \xi$ and let $(v_0,\dots,v_k)$ be an arc with $v_0=o$ and $v_k=v$. By definition, $v_j=s_j$ for all $j \le r$. Now, considering the arc $(v_k,v_{k-1},\dots,v_r,s_{r+1},\dots,s_{n-1})$, we have $G(v,s_{n-1};\gamma) = G(v_k,s_{n-1};\gamma) = \zeta_{v_{k-1}}^{\gamma}(v_k) G(v_{k-1},s_{n-1};\gamma) = \prod_{j=r}^{k-1} \zeta_{v_j}^{\gamma}(v_{j+1}) G(v_r,s_{n-1};\gamma)$. Furthermore, $G(o,s_{n-1};\gamma) = \zeta_{s_1}^{\gamma}(o)G(s_1,s_{n-1};\gamma) = \prod_{j=0}^{r-1} \zeta_{v_{j+1}}^{\gamma}(v_j) G(v_r,s_{n-1};\gamma)$ because $v_j=s_j$ for all $j \le r$. Hence,
\[
G(v,s_{n-1};\gamma) = \prod_{j=r}^{k-1} \zeta_{v_j}^{\gamma}(v_{j+1}) G(v_r,s_{n-1};\gamma) = \frac{\prod_{j=r}^{k-1} \zeta_{v_j}^{\gamma}(v_{j+1})}{\prod_{j=0}^{r-1} \zeta_{v_{j+1}}^{\gamma}(v_j) } G(o,s_{n-1};\gamma) \, .
\]
It follows by (\ref{eq:nuvmesure}) and Lemma~\ref{lem:poissonexpan} that
\[
\nu_{E,v}(\partial \mathcal{T}_{s_n}) = \frac{1}{\pi} \cdot |P_{E,\xi}(v)|^2 \cdot |G(o,s_{n-1};E+i0)|^2 \cdot |\Im \zeta_{s_{n-1}}^{E+i0}(s_n)|  \, ,
\]
which proves (\ref{eq:raniko}). Finally, choosing $n$ such that $n>\max(|v|,|w|)$, we have $P_{E,\xi'}(v) = P_{E,s_n}(v)$ and $P_{E,\xi'}(w) = P_{E,s_n}(w)$ for any $\xi' \in \partial \mathcal{T}_{s_n}$. So by (\ref{eq:poissonprel}) and (\ref{eq:pexiv}),
\begin{align*}
\Psi_{E,v}(w) & = \sum_{|s_n|=n} \frac{P_{E,s_n}(w)}{P_{E,s_n}(v)}\,\nu_{E,v}(\partial \mathcal{T}_{s_n}) = \sum_{|s_n|=n}\frac{P_{E,s_n}(w)}{P_{E,s_n}(v)}\cdot |P_{E,s_n}(v)|^2\,\nu_E(\partial \mathcal{T}_{s_n}) \\
& = \int_{\partial \mathcal{T}} \overline{P_{E,\xi'}(v)}P_{E,\xi'}(w)\,\dd \nu_E(\xi') \, .            \qedhere
\end{align*}
\end{proof}

We may finally prove our main result.

\begin{thm}[Fourier transform, Plancherel formula]                  \label{thm:planch}
Suppose $H$ has purely absolutely continuous spectrum in some measurable set $I$ and let $\nu_E$ be the measure constructed in Lemma~\ref{lem:psiv}.
\begin{enumerate}[\rm (i)]
\item For any bounded Borel $F:I\to \C$ and any $v,w\in \mathcal{T}$, we have
\[
F(H)(v,w) = \int_I F(E) \Psi_{E,v}(w)\,\dd E = \int_I \int_{\partial \mathcal{T}} F(E) P_{E,\xi}(v)\overline{P_{E,\xi}(w)}\,\dd \nu_E(\xi) \,\dd E \, .
\]
\item For any $f$ on $\mathcal{T}$ with finite support and any bounded Borel $F:I\to \C$, we have
\[
[F(H)f](v) = \int_I F(E) \langle \text{\ovF{$f$}},\Psi_{E,v}\rangle\,\dd E = \int_I \int_{\partial \mathcal{T}} F(E) P_{E,\xi}(v) \langle P_{E,\xi}, f\rangle\,\dd \nu_E(\xi)\,\dd E \, .
\]
\item For any $K$ on $\mathcal{T} \times \mathcal{T}$ with finite support and any bounded Borel $F:I \to \C$, we have
\[
\tr[F(H)K] = \int_I \int_{\partial \mathcal{T}} F(E) \langle P_{E,\xi},KP_{E,\xi}\rangle\,\dd \nu_E(\xi)\,\dd E \, .
\]
\end{enumerate}
\end{thm}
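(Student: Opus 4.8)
The plan is to deduce all three statements from the two preceding lemmas: Lemma~\ref{lem:1stexp}(i), which asserts $F(H)(v,w)=\int_I F(E)\,\Psi_{E,v}(w)\,\dd E$ whenever the spectrum is purely absolutely continuous on $I$, and Lemma~\ref{lem:psiv}, which expands $\Psi_{E,v}$ in the Poisson kernel against the measure $\nu_E$. The genuine analytic content is entirely contained in those lemmas; what remains is bookkeeping with finite sums and the inner-product conventions.

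First I would prove (i). Substituting Lemma~\ref{lem:psiv} into Lemma~\ref{lem:1stexp}(i) gives immediately
\[
F(H)(v,w)=\int_I\int_{\partial\mathcal{T}}F(E)\,\overline{P_{E,\xi}(v)}\,P_{E,\xi}(w)\,\dd\nu_E(\xi)\,\dd E .
\]
To recover the integrand $P_{E,\xi}(v)\overline{P_{E,\xi}(w)}$ stated in the theorem, I would exploit the symmetry $\Psi_{E,v}(w)=\Psi_{E,w}(v)$, which follows from $G(v,w;\gamma)=G(w,v;\gamma)$ in~\eqref{eq:greensym} (and hence $\Im G(v,w;\gamma)=\Im G(w,v;\gamma)$). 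Applying Lemma~\ref{lem:psiv} with $v$ and $w$ interchanged then yields $\Psi_{E,v}(w)=\int_{\partial\mathcal{T}}P_{E,\xi}(v)\overline{P_{E,\xi}(w)}\,\dd\nu_E(\xi)$, which is exactly the desired form. Note that this step is merely the substitution of one identity into another, so no interchange of sum and integral is involved.

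Next I would treat (ii) and (iii), both of which reduce to (i) by linearity. For $f$ of finite support, I expand $[F(H)f](v)=\sum_w F(H)(v,w)f(w)$; since the sum over $w$ is finite I may move it inside the integrals of (i). Recognizing $\sum_w\Psi_{E,v}(w)f(w)=\langle\,\overline{f},\Psi_{E,v}\rangle$ produces the first equality, and recognizing $\sum_w\overline{P_{E,\xi}(w)}f(w)=\langle P_{E,\xi},f\rangle$ (the conjugate falling on the first slot, per the convention of the introduction) produces the second. For (iii), with $K$ of finite support the operator $F(H)K$ is finite rank, hence trace class, and $\tr[F(H)K]=\sum_{v,u}F(H)(v,u)\,K(u,v)$; inserting (i) and interchanging the finite double sum with the integrals leaves the inner sum $\sum_{u,v}\overline{P_{E,\xi}(u)}\,K(u,v)\,P_{E,\xi}(v)$, which is precisely $\langle P_{E,\xi},KP_{E,\xi}\rangle$.

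I do not anticipate a serious obstacle, as the two lemmas do all the work. The only points requiring attention are the consistent placement of complex conjugates --- controlled by the inner-product convention together with the symmetry of $\Psi$ used in (i) --- and the legitimacy of the sum/integral interchanges, which is immediate here because $f$ and $K$ have finite support and the relevant sums are finite. Should one wish to drop the finite-support assumption, the interchange would instead have to be justified through a Fubini or dominated-convergence argument controlled by a Plancherel-type bound, but this is not needed for the statement as given.
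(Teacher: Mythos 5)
Your proposal is correct and follows essentially the same route as the paper: part (i) by combining Lemma~\ref{lem:1stexp}(i) with Lemma~\ref{lem:psiv} via the symmetry $\Psi_{E,v}(w)=\Psi_{E,w}(v)$ from \eqref{eq:greensym}, and parts (ii)--(iii) by finite linear combinations of (i). The only detail the paper makes explicit that you leave implicit is that Lemma~\ref{lem:psiv} applies only for $E\in\mathfrak{S}$, which suffices because $\mathfrak{S}$ has full Lebesgue measure so $\int_I=\int_{I\cap\mathfrak{S}}$.
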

\begin{proof}
We proved that $F(H)(v,w) = \int_I F(E) \Psi_{E,v}(w)\,\dd E$ in Lemma~\ref{lem:1stexp}. Since $\Psi_{E,v}(w) = \Psi_{E,w}(v)$ by (\ref{eq:greensym}), we obtain (i) using Lemma~\ref{lem:psiv} and the fact that $\int_I = \int_{I \,\cap \,\mathfrak{S}}$.

Next, given $f$ with finite support, say $f=\sum_w f(w)\delta_w$, we have
\[
[F(H)f](v) = \langle \delta_v,F(H)f\rangle = \sum_{w\in \mathcal{T}} f(w)\langle \delta_v, F(H)\delta_w\rangle = \sum_{w\in \mathcal{T}} f(w)\int F(E) \Psi_{E,v}(w)\,\dd E \, .
\]
On one hand this equals $\int F(E)  \langle \text{\ovF{$f$}},\Psi_{E,v}\rangle\,\dd E$, on the other hand, if we use (i), we see it is equal to $\int_I \int_{\partial \mathcal{T}} F(E) P_{E,\xi}(v) \langle P_{E,\xi}, f\rangle\,\dd \nu_E(\xi)\,\dd E $.

Finally, given $K$ with finite support, we have by (i),
\begin{align*}
& \int_{I\times \partial \mathcal{T}} F(E)\langle P_{E,\xi}, K P_{E,\xi} \rangle \, \dd \nu_E(\xi) \dd E   = \int_{I \times \partial \mathcal{T}} \sum_{w \in \mathcal{T}} F(E)\overline{P_{E,\xi}(w)} (KP_{E,\xi})(w) \,\dd \nu_E(\xi) \dd E \\
& \qquad = \sum_{v,w \in \mathcal{T}} K(w,v) \int_{I \times \partial \mathcal{T}} F(E)\overline{P_{E,\xi}(w)} P_{E,\xi}(v) \,\dd \nu_E(\xi) \dd E \\
& \qquad = \sum_{v,w\in \mathcal{T}} K(w,v) F(H)(v,w) = \tr[K F(H)] = \tr[F(H)K] \, .            \qedhere
\end{align*}
\end{proof}

\begin{rem}\label{r:more}
The results of this paper generalize without difficulty to self-adjoint operators of the form $(H_p\psi)(v) = \sum_w p_v(w) f(w)$, where $p_v(w) = 0$ if $d(v,w)>1$, assuming all coefficients $p_v(w)$ are real, with $p_v(w)=p_w(v)$ and $p_v(w)\neq 0$ whenever $v\sim w$. In this case, relation (\ref{eq:green1}) becomes $G_p(v,v;\gamma) = \frac{1}{p_v(v) - \gamma - \sum_{u\sim v} p_v(u)p_u(v)G_p^{(u|v)}(u,u;\gamma)}$, while (\ref{eq:multigreen}) becomes $G_p(v_0,v_k;\gamma) = (-1)^k\prod_{j=0}^{k-1}p_{v_j}(v_{j+1})G_p^{(v_j|v_{j+1})}(v_j,v_j;\gamma)\cdot G_p(v_k,v_k;\gamma)$. We then put $G_p(v,v;\gamma) = \frac{-1}{2m_v^{\gamma}}$ and $\zeta_w^{\gamma}(v) = -p_v(w) G_p^{(v|w)}(v,v;\gamma)$. Then one may obtain similar expansions in Poisson kernels, with minor modifications in the formulas of $\nu_E$. Note that $p_v(v)$ plays the role of $V(v)$ for such operators.
 \end{rem}
 
 \medskip
 
 {\bf{Acknowledgements~:}} This material is based upon work supported by the Agence Nationale de la Recherche under grant No.ANR-13-BS01-0007-01, by the Labex IRMIA and the Institute of Advance Study of Universit\'e de Strasbourg, and by Institut Universitaire de France.

\providecommand{\bysame}{\leavevmode\hbox to3em{\hrulefill}\thinspace}
\providecommand{\MR}{\relax\ifhmode\unskip\space\fi MR }
\providecommand{\MRhref}[2]{%
  \href{http://www.ams.org/mathscinet-getitem?mr=#1}{#2}
}
\providecommand{\href}[2]{#2}

\end{document}